\newtheorem{thm}{Theorem}[section]
\newtheorem{prop}[thm]{Proposition}
\newtheorem{lemma}[thm]{Lemma}
\newtheorem{cor}[thm]{Corollary}
\theoremstyle{definition}
\newtheorem{rmk}[thm]{Remark}
\newtheorem{eg}[thm]{Example}
\newtheorem{defn}[thm]{Definition}
\newenvironment{pf}{\begin{proof}}{\end{proof}}
\newcommand{\C}{\ensuremath{\mathbb{C}}}
\newcommand{\F}{\ensuremath{\mathbb{F}}}
\newcommand{\bM}{\ensuremath{\mathbb{M}}}
\newcommand{\rtarr}{\longrightarrow}
\newcommand{\ltarr}{\longleftarrow}
\newcommand{\xrtarr}[1]{\xrightarrow{#1}}
\newcommand{\xltarr}[1]{\xleftarrow{#1}}
\newcommand{\iso}{\cong}
\newcommand{\Sq}{\ensuremath{\mathrm{Sq}}}
\DeclareMathOperator{\Hom}{Hom}
\DeclareMathOperator{\Ext}{Ext}
\newcommand{\cirrad}{0.11}
\newcolumntype{C}{>{$}c<{$}}
\newcolumntype{L}{>{$}l<{$}}
\newcolumntype{R}{>{$}r<{$}}
\newcolumntype{H}{>{\setbox0=\hbox\bgroup$}c<{$\egroup}@{}}
\begin{document}
\title{The motivic Adams vanishing line of slope $1/2$}
\author{Bertrand J. Guillou}
\address{Department of Mathematics\\ University of Kentucky\\
Lexington, KY 40506, USA}
\email{bertguillou@uky.edu}

\author{Daniel C. Isaksen}
\address{Department of Mathematics\\ Wayne State University\\
Detroit, MI 48202, USA}
\email{isaksen@wayne.edu}
\thanks{The first author was supported by Simons Collaboration Grant 282316.
The second author was supported by NSF grant DMS-1202213.}

\subjclass[2000]{14F42, 55S10}

\keywords{cohomology of the Steenrod algebra, 
vanishing line, 
motivic homotopy theory}

\begin{abstract}
We establish a motivic version of Adams' vanishing line 
of slope $1/2$ in the cohomology of the motivic Steenrod algebra
over $\C$.
\end{abstract}

\date{\today}

\maketitle

\section{Introduction}

One of the most powerful tools for computing stable homotopy groups of spheres is the Adams spectral sequence
\[ \Ext^{s,t}_A(\F_2,\F_2) \Rightarrow \hat\pi_{t-s}, \]
where $A$ denotes the Steenrod algebra of 
stable mod $2$ cohomology operations and $\hat\pi_*$ are the 2-completed stable homotopy groups.
Adams established that these $\Ext$ groups vanish above a certain line of slope $1/2$, with the exception of the elements $h_0^k$ in the $0$-stem
\cite{A2}.

In the motivic context over $\C$,
inspection of an Adams chart \cite{Icharts}
immediately shows that the analogous $\Ext$ groups do not vanish
above the same line of slope $1/2$.  (However, the motivic
$\Ext$ groups do vanish above a line of slope $1$, analogously to
an earlier classical vanishing result of Adams \cite{Adams1}.)
Further inspection shows that in a large range, all elements
above the Adams line of slope $1/2$ are $h_1$-local, in the sense
that they are $h_1$-divisible and support infinitely many multiplications
by $h_1$.  This suggests the following theorem, whose proof is the goal
of this article.

\begin{thm}\label{MainThm} Let $s>0$, and let
$A$ be the motivic mod 2 Steenrod algebra over $\C$.
The map
\[ 
h_1:\Ext_A^{s,f,w}(\bM_2,\bM_2) \rightarrow \Ext_A^{s+1,f+1,w+1}(\bM_2,\bM_2)
\]
is an isomorphism 
if $f \geq \frac12s+2$, and it is a surjection if $f\geq \frac12s+\frac12$.
%if $0 < t-s < 2s - \epsilon$, where $\epsilon = 3$ for $s\equiv 0,3 \pmod4$, $\epsilon = 1$ for $s\equiv 1\pmod4$, and $\epsilon = 2$ for $s\equiv 2\pmod4$ and a surjection if $t-s=2s-\epsilon$.
\end{thm}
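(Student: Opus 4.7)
The plan is to adapt Adams' classical proof of the slope $1/2$ vanishing line to the motivic setting over $\C$. Classically, Adams reduces the computation to the subalgebra $A(1) = \langle \Sq^1, \Sq^2 \rangle$, whose $\Ext$-algebra is explicit and exhibits $h_1$-periodicity above the relevant line. I would pursue the same strategy using the motivic subalgebra $A(1) \subset A$, via a Cartan--Eilenberg spectral sequence
\[
E_2 = \Ext_{A /\!/ A(1)}\bigl(\bM_2, \Ext_{A(1)}(\bM_2, \bM_2)\bigr) \Rightarrow \Ext_A(\bM_2, \bM_2),
\]
while tracking all three degrees $(s,f,w)$.

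First, I would compute $\Ext_{A(1)}(\bM_2, \bM_2)$ explicitly, using a motivic May spectral sequence. Above a line of the form $f = s/2 + c$, this $\Ext$-algebra is expected to consist of $h_1$-periodic families (the motivic analogues of the classical $v_1$-periodic towers), so the desired iso/surjection statement for $h_1$-multiplication holds on this piece, with the motivic weight $w$ increasing by exactly $1$ per $h_1$-multiplication.

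Next, I would propagate the statement through the Cartan--Eilenberg spectral sequence. Since $h_1$ is a permanent cycle and the iso/surjection already holds on $E_2$ above the slope $1/2$ line, the statement should survive to $E_\infty$ provided that no differentials or hidden extensions can produce elements below the bound. The allowable extensions are controlled by the motivic weight: the extra structure of $A$ relative to the classical Steenrod algebra is encoded by powers of $\tau$, whose contributions are tracked by $w$, so a careful slope argument should show that differentials only move points further above the critical line.

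The main obstacle is the bookkeeping in the first step. The motivic $\Ext_{A(1)}(\bM_2, \bM_2)$ is strictly larger than its classical analogue, containing $\tau$-multiples of classical elements as well as genuinely new motivic families concentrated in negative cone weights. One must verify that none of these extra contributions disrupt the $h_1$-periodicity above the slope $1/2$ line, and that the interplay between $\tau$ and $h_1$ preserves the iso/surjection bounds precisely in the weight variable. Establishing the exact constants $c = 2$ (for iso) and $c = 1/2$ (for surjection), rather than a weaker range, will require carefully identifying the sporadic elements just above the line, which is the delicate part of Adams' original argument and should likewise be delicate here.
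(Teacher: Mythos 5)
Your plan hinges on a Cartan--Eilenberg spectral sequence for the pair $A(1)\subset A$, but that spectral sequence does not exist in the form you write: $A(1)$ is not a \emph{normal} sub-Hopf-algebra of $A$ (motivically or classically), so $A/\!/A(1)$ is only an $A$-module coalgebra, not an algebra, and $\Ext_{A/\!/A(1)}(\bM_2,-)$ is undefined. One can substitute a Davis--Mahowald-type (algebraic Atiyah--Hirzebruch) spectral sequence built from an $A(1)$-free resolution, but then the entire difficulty of the theorem is relocated into controlling its differentials, which you defer. A second, more structural problem: you are trying to push an \emph{isomorphism/surjection} statement for $h_1$-multiplication through a spectral sequence. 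Isomorphisms do not propagate through filtrations the way vanishing does (a map can be an isomorphism on $E_\infty$ of the associated graded without being one on the abutment, and vice versa, because of filtration jumps). The paper's essential move, which your outline lacks, is to convert the $h_1$-periodicity claim into a genuine vanishing claim: by the long exact sequence for $0\to\bM_2\to\mathbf{C}_\eta\to\Sigma^{-2,-1}\bM_2\to 0$, the desired iso/surjectivity of $h_1$ on $\Ext_A(M,\bM_2)$ is equivalent to the vanishing of $\Ext_A^{s,f,w}(M,\mathbf{C}_\eta)$ for $s<2f$. Vanishing statements do pass cleanly through short exact sequences and resolutions, which is what makes an inductive argument possible.

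The paper then follows Adams' original periodicity argument rather than a change-of-rings computation: it first replaces $\bM_2$ by an $A(0)$-free module $N$ (via the augmentation ideal of $A/\!/A(0)$, which accounts for the shift from the line $s<2f-2$ to the stated $f\geq \tfrac12 s+2$), establishes the vanishing of $\Ext_A(A(0),\mathbf{C}_\eta)$ for $f\leq 4$ by direct low-degree computation, and bootstraps using a resolution of $A(0)$ by copies of $A$ and a twisted module $\widetilde{A}$ (both $\Ext_A(-,\mathbf{C}_\eta)$-acyclic) whose third kernel is $12$-connected; this yields the sharp slope $1/2$ and the sharp constants. The motivic wrinkle your sketch underestimates is that $\bM_2=\F_2[\tau]$ is not a field, so freeness over $A(0)$ and the splitting of modules require the Margolis-homology and $\tau$-torsion analysis of the paper's Proposition on $A(0)$-freeness, and the module $\widetilde{A}(1)$ must be introduced precisely because $\Sq^2$ hits $\tau$ times a generator rather than the generator itself. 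As it stands, your proposal identifies the right classical antecedent but does not contain a workable mechanism for the reduction, so the argument has a genuine gap.
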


In the grading $(s,f,w)$ from Theorem \ref{MainThm},
$s$ is the stem and $f$ is the Adams filtration so that
$(s,f)$ represent the traditional coordinates in an Adams chart.
Meanwhile, $w$ is the motivic weight, which is not relevant to the 
statement of the theorem.

\begin{pf}
In Section \ref{sec:CohomA2}, we will show that 
\[\Ext_A^{s,f,w}(\bM_2,\bM_2) \iso \Ext_A^{s-1,f-1,w-1}(N,\bM_2),\]
where $N$ is a certain $A$-module that is free as a left $A(0)$-module.
We will prove in Proposition \ref{MainProp} that
multiplication by $h_1$ has the desired property for such $A$-modules.
\end{pf}

Inspection shows that 
Theorem \ref{MainThm} is optimal in the following sense.
Multiplication by $h_1$ is not an isomorphism
along the line $f = \frac{1}{2} s + \frac{3}{2}$,
and it is not a surjection along the line
$f = \frac{1}{2} s$.

This article is only concerned with algebraic calculations of $\Ext$ groups
and does not discuss Adams spectral sequences for which these $\Ext$ groups
are inputs.  However, our interest in $h_1$ multiplications is motivated by
recent work \cite{AM2} \cite{GI}
on the $\eta$-local motivic sphere, where $\eta$ is the first
Hopf map that is detected by $h_1$.

Another possible approach to Theorem \ref{MainThm} uses
the methods established by Andrews and Miller \cite{AM2}
(see especially \cite[Lemma 5.2.1]{AM2}).
However, one must be careful about the slightly strange behavior
of motivic Margolis homology (see Proposition \ref{prop:FreeA0}).
For the techniques of \cite{AM2} to work motivically, one needs
to verify that certain quotients of the motivic Steenrod algebra
are $\bM_2$-free.  This is more or less the same as the 
homological results of Sections \ref{sctn:A0-mod} and \ref{sctn:Milnor}.

\subsection{Organization}
We review some background in Section \ref{sec:background}.
In Sections \ref{sctn:A0-mod} through \ref{sctn:cofiber-eta},
we assemble various facts about motivic homological algebra.
Finally, 
we give the main technical result in Proposition \ref{MainProp},
from which Theorem \ref{MainThm} follows immediately.

%\subsection*{Acknowledgements}

\section{Preliminaries}\label{sec:background}

\subsection{Notation}

We continue with notation from \cite{Istems} as follows:
\begin{enumerate}
\item
$\bM_2$ 
is the motivic cohomology of $\C$ with $\F_2$ coefficients.
\item
$A$ is the mod 2 motivic Steenrod algebra over $\C$ generated by elements $\Sq^1$ of bidegree $(1,0)$ and $\Sq^{2^n}$ of bidegree $(2^n,2^{n-1})$ for $n\geq 1$.
\item
$A(0)$ is the (exterior) $\bM_2$-subalgebra of $A$ generated by $\Sq^1$.
\item
$A(1)$ is the $\bM_2$-subalgebra of $A$ generated by
$\Sq^1$ and $\Sq^2$.
\item
$\Ext_A$ is the trigraded ring $\Ext_A(\bM_2,\bM_2)$.
%\item
%$A_{\cl}$ is the mod 2 classical Steenrod algebra.
%\item
%$\Ext_{A_\cl}$ is the bigraded ring $\Ext_{A_{\cl}}(\F_2,\F_2)$.
\end{enumerate}

The following two  theorems of Voevodsky are the starting points
of our calculations.

\begin{thm}[\cite{V1}]
$\bM_2$ is the bigraded ring $\F_2[\tau]$, where
$\tau$ has bidegree $(0,1)$.
\end{thm}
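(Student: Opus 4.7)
The plan is to reduce the computation of $\bM_2 = H^{*,*}(\Spec \C; \F_2)$ to several standard structural inputs on the motivic cohomology of fields, organized by the relationship between the cohomological degree $p$ and the weight $q$. First I would use Voevodsky's construction of the motivic complex $\F_2(q)$, which is supported in cohomological degrees $\leq q$ and is zero for $q<0$, to eliminate all bidegrees with $p > q$ or $q < 0$. This confines the remaining computation to the region $0 \leq p \leq q$.

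Along the diagonal $p = q$, I would invoke the Nesterenko--Suslin--Totaro theorem identifying $H^{n,n}(\Spec k; \F_2) \cong K_n^M(k)/2$ for any field $k$. Specializing to $k = \C$, the Milnor K-groups are uniquely divisible in positive degrees (because $\C^\times$ is divisible), so $K_n^M(\C)/2 = 0$ for $n \geq 1$, while $K_0^M(\C)/2 = \F_2$. Thus the diagonal contributes only in bidegree $(0,0)$.

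For the remaining strip $0 \leq p < q$, I would appeal to the Beilinson--Lichtenbaum conjecture (proved by Voevodsky with essential input from Rost), which provides an isomorphism $H^{p,q}(\Spec \C; \F_2) \cong H^p_{\text{\'et}}(\Spec \C; \mu_2^{\otimes q})$ in this range. Since $\Spec \C$ is a geometric point, its \'etale cohomology vanishes in positive degrees, and $H^0_{\text{\'et}}(\Spec \C; \mu_2^{\otimes q}) \cong \F_2$. So the only surviving bidegrees are $(0,q)$ for $q \geq 0$, each a copy of $\F_2$.

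Letting $\tau$ denote a generator of $H^{0,1}$, the ring structure on motivic cohomology then forces $\tau^q$ to be the nonzero class in $H^{0,q}$, yielding $\bM_2 \cong \F_2[\tau]$ as bigraded rings. The main obstacle in this plan is of course the Beilinson--Lichtenbaum identification: this is the genuinely deep input, and the rest of the argument is essentially formal bookkeeping once that result is available.
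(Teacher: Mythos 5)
The paper does not prove this statement at all: it is quoted as an external input, attributed to Voevodsky \cite{V1}, and the authors' work begins downstream of it. So there is no internal proof to compare against. That said, your outline is the standard derivation and is essentially correct: the vanishing for $p>q$ and $q<0$ from the support of the motivic complexes, Nesterenko--Suslin--Totaro on the diagonal together with divisibility of $\C^\times$, and Beilinson--Lichtenbaum (equivalently, at the prime $2$, the Milnor conjecture) in the range $0\leq p<q$, where the \'etale cohomology of the point $\Spec\C$ collapses to $H^0_{\mathrm{\acute et}}(\Spec\C;\mu_2^{\otimes q})\cong\F_2$. The only step you wave at slightly is the ring structure: to see that $\tau^q$ generates $H^{0,q}$ you should say that the comparison map to \'etale cohomology is multiplicative and that the cup product $H^0(\mu_2^{\otimes a})\otimes H^0(\mu_2^{\otimes b})\to H^0(\mu_2^{\otimes a+b})$ is the canonical isomorphism of one-dimensional $\F_2$-vector spaces; alternatively, $\tau$ is the class of $-1\in\mu_2(\C)$ and its powers are visibly nonzero on the \'etale side. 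You correctly identify Beilinson--Lichtenbaum as the genuinely deep input; be aware that this is far heavier machinery than anything the present paper uses, which is precisely why the authors import the result as a black box rather than proving it.
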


It is often more convenient to work with the dual $A_{*,*} = \Hom_{\bM_2}(A,\bM_2)$, which was described by Voevodsky. (See also \cite{Borghesi} for a clean description.)

\begin{thm}\label{thm:StructureDual}\cite{V2} \cite[Theorem 12.6]{V3} 
The dual motivic Steenrod algebra $A_{*,*}$ is generated as an $\bM_2$-algebra by 
$\xi_i\in A_{2(2^i-1),2^i-1}$ and $\tau_i\in A_{2^{i+1}-1,2^i-1}$ subject to the
relations
\[ \tau_i^2 = \tau \xi_{i+1}.\]
The coproduct is given on the generators by the following formulas, in which
$\xi_0 = 1$:
\[ \Delta(\tau_k) = \tau_k\otimes 1 + \sum_i \xi_{k-i}^{2^i}\otimes \tau_i\]
\[ \Delta(\xi_k) = \sum_i  \xi_{k-i}^{2^i} \otimes \xi_i.\]
\end{thm}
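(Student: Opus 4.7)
The plan is to follow Voevodsky's approach in \cite{V2, V3}, reducing the statement to a detailed calculation in the motivic cohomology of classifying spaces. The construction of the motivic Steenrod algebra already provides generators $\Sq^1$ and $\Sq^{2^n}$ with prescribed bidegrees; dualizing against a Milnor-style admissible basis of $A$ yields dual generators in $A_{*,*}$. The bidegrees of $\xi_i$ and $\tau_i$ claimed in the theorem come from reading off the bidegrees of the monomials $\Sq^{2^{i-1}}\cdots \Sq^2 \Sq^1$ and their companions obtained by prepending a Bockstein.

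The heart of the proof is a computation of $\rH^{*,*}(B\mu_2;\F_2)$ together with its $A$-module structure. Over $\C$, where the class of $-1$ vanishes, one finds
\[
\rH^{*,*}(B\mu_2;\F_2) \iso \bM_2[v,u]/(u^2 - \tau v),
\]
with $v$ in bidegree $(2,1)$ and $u$ in bidegree $(1,1)$. The non-classical relation $u^2 = \tau v$, replacing the topological identity $u^2 = 0$, is the single essential motivic feature and is precisely what produces the relation $\tau_i^2 = \tau \xi_{i+1}$ after dualization. From here one transports Milnor's original calculation to the motivic setting: evaluate the total Steenrod power on the generators in $\rH^{*,*}((B\mu_2)^{\wedge n};\F_2)$, read off the coefficients of the resulting power series to \emph{define} the elements $\xi_i$ and $\tau_i$, and extract from the formal identities both the multiplicative structure of $A_{*,*}$ and the coproduct formulas. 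The coproduct on $\xi_k$ then appears verbatim as in Milnor's classical calculation, while the extra summand $\tau_k \otimes 1$ in $\Delta(\tau_k)$ is produced by the Bockstein's action on $u$ and the $\tau v$ correction in $u^2$.

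To finish, one checks that the $\xi_i$ and $\tau_i$ so constructed generate $A_{*,*}$ as an $\bM_2$-algebra; this is dual to the assertion that admissible monomials in the motivic Steenrod squares form an $\bM_2$-basis of $A$. The main obstacle is the foundational input: one must justify the calculation of $\rH^{*,*}(B\mu_2;\F_2)$, construct the geometric total Steenrod operation and verify it is compatible with the algebraic $\Sq^i$, and handle convergence in the pro-system of finite-dimensional approximations to $B\mu_2$ and to $BS_n$. Once the framework of \cite{V2,V3} is granted, the remainder of the argument is a direct motivic analogue of Milnor's classical proof, with the relation $u^2 = \tau v$ supplying the only genuinely new ingredient.
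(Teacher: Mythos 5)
This statement is not proved in the paper at all: it is quoted as a theorem of Voevodsky, with the proof living entirely in \cite{V2} and \cite[Theorem 12.6]{V3}, so there is no internal argument to compare yours against. Your outline is a fair summary of how Voevodsky's proof (the motivic analogue of Milnor's) actually goes: the computation $\rH^{*,*}(B\mu_2;\F_2)\iso \bM_2[v,u]/(u^2-\tau v)$ over $\C$ (the class $\rho$ of $-1$ vanishing is exactly why the general relation $u^2=\tau v+\rho u$, and likewise $\tau_i^2=\tau\xi_{i+1}+\rho\tau_{i+1}+\rho\tau_0\xi_{i+1}$, simplify to the forms stated), the definition of $\xi_i,\tau_i$ as coefficients of the coaction on $v$ and $u$, and the extraction of the coproduct from coassociativity, with generation dual to the admissible-monomial basis of $A$.

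Two caveats. First, what you have written is a proof sketch, not a proof: every substantive input --- the computation of $\rH^{*,*}(B\mu_2)$, the construction of the total power operation and its compatibility with the $\Sq^i$, the $\bM_2$-freeness of $A$ on admissible monomials, the pro-object/convergence issues, and the verification that $\tau_i^2=\tau\xi_{i+1}$ is the \emph{only} relation --- is deferred to the references, which is acceptable here only because the paper itself treats the theorem as imported. Second, a small misattribution: the summand $\tau_k\otimes 1$ in $\Delta(\tau_k)$ is not produced by the relation $u^2=\tau v$; it comes from the identity component of the coaction on $u$ (the term $u\otimes 1$), exactly as in the classical odd-primary case. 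The relation $u^2=\tau v$ enters only in the multiplicative relation $\tau_i^2=\tau\xi_{i+1}$, as you correctly say earlier.
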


\begin{rmk}
The quotient $A_{*,*}/\tau=A_{*,*}\otimes_{\bM_2}\F_2$ is analogous to the 
odd-primary classical dual Steenrod algebra, in the sense that there is an
infinite family of exterior generators $\tau_i$ and an infinite family
of polynomial generators $\xi_i$.  On the other hand,
the localization $A_{*,*}[\tau^{-1}]$ is analogous to the
mod 2 classical dual Steenrod algebra, which has only polynomial generators
$\tau_i$.
\end{rmk}

\subsection{Grading conventions}

%We are interested in computing a localization of
%$\Ext_A$.
%Before localization, this is a trigraded object.  
We follow \cite{Istems} in grading $\Ext_A$
according to  $(s,f,w)$, where:
\begin{enumerate}
\item
$f$ is the Adams filtration, i.e., the homological degree.
\item
$s+f$ is the internal degree, i.e., corresponds to the
first coordinate in the bidegrees of $A$.
\item
$s$ is the stem, i.e., the internal degree minus
the Adams filtration.
\item
$w$ is the weight.
\end{enumerate}

\section{Margolis homology}
\label{sctn:A0-mod}

Recall that $\Sq^1 \Sq^1=0$, so that $\Sq^1$ acts as a differential on any $A$-module. 
We write $H^{**}(M;\Sq^1)$ for the resulting 
{\bf Margolis homology} groups \cite{AM1}.
We say that an $A$-module is {\bf bounded below} if $M^{p,q}=0$ 
for sufficiently small $p$.
A bounded below $A$-module is {\bf of finite type} if each
$M^{p,*}$ is a finitely generated $\bM_2$-module.

\begin{rmk}
\label{rmk:M2-module}
We will need the following fact about finitely generated
$\bM_2$-modules.  Such modules are of the form
\[
(\bM_2)^k \oplus \bigoplus_i (\bM_2 / \tau^{r_i} ).
\]
This is a graded version of the classification of finitely generated
modules over a principal ideal domain, since
every
homogeneous ideal of $\bM_2$ is generated
by an element of the form $\tau^{r_i}$.
Consequently,
a finitely generated $\bM_2$-module is free if and only if 
it has no $\tau$ torsion.
\end{rmk}

\begin{prop}\label{prop:FreeA0} Let $M$ be a bounded below $A$-module
of finite type. 
Then $M$ is free as an $A(0)$-module if and only if $M$ is free as an $\bM_2$-module and $H^{*,*}(M;\Sq^1)=0$.
\end{prop}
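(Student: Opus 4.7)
The plan is to prove both implications by directly constructing (or deconstructing) an $A(0)$-basis. The forward direction is straightforward, and the reverse direction reduces to a splitting problem for a short exact sequence of graded $\bM_2$-modules.

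For the forward implication, note that $A(0)$ is $\bM_2$-free on $\{1, \Sq^1\}$, and its Margolis complex $\bM_2 \xrightarrow{\Sq^1} \bM_2 \cdot \Sq^1$ is acyclic. Both properties pass to arbitrary direct sums, so any free $A(0)$-module is $\bM_2$-free with vanishing $\Sq^1$-homology.

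For the reverse implication, suppose $M$ is $\bM_2$-free with $H^{*,*}(M;\Sq^1)=0$. Set $Z = \ker(\Sq^1)$ and $B = \operatorname{im}(\Sq^1)$, so that the hypothesis gives $Z=B$, and we have a short exact sequence of graded $\bM_2$-modules
\[
0 \to Z \to M \xrightarrow{\Sq^1} Z \to 0
\]
(with an internal degree shift). I will first argue that $Z$ is free as an $\bM_2$-module. Since $M$ is bounded below and of finite type, each $M^{p,*}$ is a finitely generated $\bM_2$-module; being a summand of $M$, it is $\bM_2$-free by Remark \ref{rmk:M2-module}. Now $Z^{p,*} = \ker(\Sq^1: M^{p,*}\to M^{p+1,*})$ is a submodule of the finitely generated free graded module $M^{p,*}$ over the graded PID $\bM_2 = \F_2[\tau]$, hence is itself free. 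Summing over $p$ shows $Z$ is $\bM_2$-free.

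With $Z$ free, the sequence splits: choose an $\bM_2$-basis $\{z_i\}$ of $Z$ and lift each $z_i$ to an element $\sigma_i \in M$ with $\Sq^1\sigma_i = z_i$. Then $\{\sigma_i\} \cup \{z_i\}$ is an $\bM_2$-basis for $M$, and since $\Sq^1\sigma_i = z_i$ and $\Sq^1 z_i = 0$, this exhibits $M = \bigoplus_i A(0)\cdot \sigma_i$ as a free $A(0)$-module on $\{\sigma_i\}$.

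The main obstacle is the freeness of $Z$ as an $\bM_2$-module: without the bounded-below / finite-type hypothesis and Remark \ref{rmk:M2-module}, a submodule of an $\bM_2$-free module need not itself be free in the motivic grading, and indeed the example $M = A(0)/\tau$ (which has vanishing Margolis homology but is neither $\bM_2$-free nor $A(0)$-free) shows that the $\bM_2$-free hypothesis is essential. Once the structure theorem is applied degree-by-degree to finitely generated modules over the graded PID $\bM_2$, the rest of the argument is the standard lifting construction.
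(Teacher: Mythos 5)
Your proof is correct, but it takes a genuinely different route from the paper's. The paper argues inductively: it chooses a nonzero element $x$ of lowest internal degree and \emph{smallest weight}, splits off the free $A(0)$-module $N = \bM_2\{x,\Sq^1 x\}$, and then checks by hand that the quotient $M/N$ acquires no $\tau$-torsion --- the smallest-weight choice is exactly what guarantees that $\Sq^1 x$ is not divisible by $\tau$ --- before repeating; the bounded-below and finite-type hypotheses drive that induction. You instead split $M$ all at once: the vanishing of Margolis homology gives the exact sequence $0 \to Z \to M \xrightarrow{\Sq^1} Z \to 0$ with $Z = \ker\Sq^1 = \operatorname{im}\Sq^1$, and the only substantive step is that $Z$ is $\bM_2$-free, which you obtain degree-by-degree from Remark \ref{rmk:M2-module} (a graded submodule of a finitely generated free $\F_2[\tau]$-module is finitely generated and $\tau$-torsion-free, hence free); then the sequence splits and the lifts $\sigma_i$ of a basis of $Z$ generate $M$ freely over $A(0)$. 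Your argument is shorter, is the natural graded-PID version of the classical splitting lemma, and in fact never uses the bounded-below hypothesis. What the paper's version buys is that it makes explicit where $\tau$-torsion could intrude (in the quotient $M/N$), which is the motivic subtlety the introduction warns about; your closing example $A(0)/\tau$ makes the same point from the other side, showing the $\bM_2$-freeness hypothesis cannot be dropped.
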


\begin{pf} The forward implication is clear. Thus suppose that $M$ is free over
$\bM_2$ and $H^{*,*}(M;\Sq^1)=0$. Suppose that $M$ is concentrated in 
degrees $(p,q)$ with $p \geq n_0$.
Let $x$ be a nonzero element of $M^{n_0,*}$ of smallest weight, and let
$y = \Sq^1(x)$ in $M^{n_0+1,*}$.
The map $\Sq^1: M^{n_0,*} \to M^{n_0+1,*}$
is injective because $H^{*,*}(M;\Sq^1) = 0$,
so $y$ is non-zero.

Let $N$ be the $\bM_2$-submodule of $M$ generated by
$x$ and $y$, and let $P$ be the quotient
$M/N$.  Then $N$ is a free $A(0)$-module
generated by $x$.

We will next argue that $P$ is $\bM_2$-free;
by Remark \ref{rmk:M2-module}, this is the same as showing
that $P$ has no $\tau$ torsion.
Equivalently, we will show that if 
$\tau z$ belongs to $N$, then so does $z$.
The main point is that $y$ is not divisible by $\tau$.
Suppose for sake of contradiction that $\tau z = y$.
Then $\tau \Sq^1(z) = \Sq^1(y) = 0$.
Since $M$ is $\bM_2$-free,
Remark \ref{rmk:M2-module} implies that
$\Sq^1(z) = 0$.
On the other hand, $z$ cannot be in the image of $\Sq^1$
because its weight is less than the weight of $x$.
This contradicts the assumption that 
$H^{*,*}(M;\Sq^1)$ is zero.

This establishes that
$M$ is isomorphic to $N \oplus P$ as an $A(0)$-module.
The long exact sequence associated to the short exact sequence
\[
0 \rtarr N \rtarr M \rtarr P \rtarr 0
\]
shows that $H^{*,*}(P;\Sq^1) = 0$.

Having split $M$ as $N \oplus P$, we may
now apply the same argument to $P$ to split
off another free $A(0)$-module.  The finite type assumption on $M$
guarantees that this process eventually splits all of $M$ 
into free $A(0)$-modules.
\end{pf}

\begin{cor}\label{cor:A0FreeSOS}
Suppose that 
\[ 0 \rtarr M_1 \rtarr M_2 \rtarr M_3 \rtarr 0 \]
is a short exact sequence of bounded below $A$-modules 
of finite type
which are free over $\bM_2$. If any two modules in this sequence are free over $A(0)$, then so is the third.
\end{cor}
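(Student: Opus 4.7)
The natural plan is to reduce the statement to a fact about Margolis homology via Proposition \ref{prop:FreeA0}, and then invoke the standard long exact sequence associated to a short exact sequence of chain complexes.

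More concretely, since all three modules $M_i$ are bounded below of finite type and $\bM_2$-free by hypothesis, Proposition \ref{prop:FreeA0} tells us that $M_i$ is $A(0)$-free if and only if $H^{*,*}(M_i;\Sq^1)=0$. So the corollary becomes: if two of the three modules have vanishing $\Sq^1$-Margolis homology, so does the third.

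To get this, I would observe that the given short exact sequence is not just a short exact sequence of $A$-modules but also a short exact sequence of chain complexes under the differential $\Sq^1$, since $A$-module maps automatically commute with $\Sq^1$. This yields a long exact sequence
\[ \cdots \rtarr H^{p,*}(M_1;\Sq^1) \rtarr H^{p,*}(M_2;\Sq^1) \rtarr H^{p,*}(M_3;\Sq^1) \rtarr H^{p+1,*}(M_1;\Sq^1) \rtarr \cdots \]
From this sequence, vanishing of any two of the three Margolis homologies forces vanishing of the third, and we conclude by reapplying Proposition \ref{prop:FreeA0} that the third module is $A(0)$-free.

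There is no real obstacle here; the only thing to be slightly careful about is that Proposition \ref{prop:FreeA0} requires the $\bM_2$-freeness hypothesis, which is exactly why it is included in the statement of the corollary (otherwise one could not conclude $A(0)$-freeness from vanishing Margolis homology alone, given the subtle $\tau$-torsion phenomena noted in Remark \ref{rmk:M2-module}). This is the only place where the $\bM_2$-freeness of all three modules is used; the long exact sequence argument itself works without it.
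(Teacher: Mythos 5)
Your argument is correct and is essentially the same as the paper's: both reduce to Proposition \ref{prop:FreeA0} and use the long exact sequence in Margolis homology to transfer vanishing from two of the modules to the third. Your added remark about where the $\bM_2$-freeness hypothesis enters is accurate but not a point of difference in method.
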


\begin{proof}
Two of the modules have no Margolis homology
by Proposition \ref{prop:FreeA0}.
The long exact sequence in Margolis homology shows that the
third also has no Margolis homology.
Then Proposition \ref{prop:FreeA0} again implies that the third
module is free over $A(0)$.
\end{proof}

\section{The motivic Milnor basis}
\label{sctn:Milnor}

Let $E=(\epsilon_0,\epsilon_1,\epsilon_2,\dots)$ be a sequence of ones and zeros, almost all zero, and let $R=(r_1,r_2,\dots)$ be a sequence of nonnegative integers, almost all zero. Then, according to Theorem~\ref{thm:StructureDual}, the elements 
\[ \tau(E)\xi(R) := \prod_{i\geq 0} \tau_i^{\epsilon_i} \prod_{j\geq 1} \xi_j^{r_j}\]
give an $\bM_2$-basis for $A_{*,*}$. 
We follow \cite{DI} in writing $P^{(\epsilon_0+2r_1,\epsilon_1+2r_2,\dots)}$ for the corresponding elements of the dual $\bM_2$-basis for $A$. 
These elements form the {\bf motivic Milnor basis} for $A$.
The resulting 
$\bM_2$-basis for $A(1)$ consists of the elements $P^{(s_1,s_2,0,0\dots)}$ 
such that $0\leq s_1\leq 3$ and $0\leq s_2\leq 1$.

Let $S = (s_1, s_2, \ldots )$.
The {\bf excess} of the Milnor basis element $P^S$ is defined to be 
$e(P^S) = \sum_i s_i$. We extend this to arbitrary elements of $A$ by taking the excess of an element $\theta$ to be the maximal excess of any Milnor basis element appearing in an expression for $\theta$.  

For two sequences $R = (r_1, r_2, \ldots)$ and $S = (s_1, s_2, \ldots)$,
we write $R+S$ for the termwise sum $(r_1 + s_1, r_2 + s_2, \ldots)$.

\begin{lemma}
\label{lem:Milnor-product}
Let $R = (r_1, r_2, \ldots)$ and $S = (s_1, s_2, \ldots)$.
Then
%$P^R \cdot P^S$ equals $\prod_i \binom{r_i}{s_i} \cdot P^{R+S}$
%modulo terms of lower excess.
\[P^R \cdot P^S  = \prod_i \binom{r_i}{s_i} \cdot P^{R+S} \ + 
 \emph{ terms of lower excess.} \]
\end{lemma}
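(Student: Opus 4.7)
The plan is to pass to the dual $A_{*,*}$ and reduce the product computation to one involving the comultiplication. By Hopf algebra duality, the coefficient of $P^T$ in $P^R \cdot P^S$ equals the coefficient of $M_R \otimes M_S$ in $\Delta(M_T)$, where $M_T \in A_{*,*}$ is the monomial dual to $P^T$. Expanding $\Delta(M_T)$ via multiplicativity of the coproduct then makes each coefficient accessible.

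First, I would expand $\Delta(M_{R+S})$ using the formulas of Theorem~\ref{thm:StructureDual}. Each $\Delta(\xi_k)$ contains ``diagonal'' terms $\xi_k \otimes 1$ and $1 \otimes \xi_k$ alongside ``off-diagonal'' corrections $\xi_{k-j}^{2^j} \otimes \xi_j$ for $1 \leq j \leq k-1$, with analogous statements for $\Delta(\tau_k)$. Any contribution from the off-diagonal terms moves generator indices downward on at least one side of the tensor; combining such choices across all factors of $\Delta(M_{R+S})$ produces basis elements $P^T$ with $e(P^T) < e(P^R) + e(P^S)$, which belong to the error term.

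Second, the coefficient of $P^{R+S}$ is computed from the diagonal selections alone. Distributing the factors of each $\Delta(\xi_k)^n$ and $\Delta(\tau_k)^n$ (where $n$ is the exponent of the corresponding generator in $M_{R+S}$) between the two sides of the tensor yields a product of multinomial coefficients, one per index. Reducing modulo $2$ and absorbing any $\tau$-multiplied contributions from the relation $\tau_i^2 = \tau\xi_{i+1}$ into the lower-excess error, this simplifies to the claimed product $\prod_i \binom{r_i}{s_i}$.

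The main obstacle is the careful combinatorial bookkeeping in the motivic setting. Unlike the classical case, the relation $\tau_i^2 = \tau\xi_{i+1}$ produces $\tau$-multiplied terms in intermediate expansions of $\Delta(M_{R+S})$, and one must verify that every such correction contributes only to strictly lower-excess basis elements, so that the leading coefficient is not perturbed.
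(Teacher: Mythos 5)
Your overall strategy---dualize and read the product off the coproduct on $A_{*,*}$, exactly as in Milnor's classical argument---is the same one the paper invokes (its proof is a one-sentence citation of the classical case), so the approach is sound. But two steps do not go through as written. First, the duality bookkeeping in your error-term analysis is inverted. Expanding $\Delta(M_{R+S})$ only computes the coefficient of $P^{R+S}$ in the various products $P^{R'}\cdot P^{S'}$; to identify the \emph{other} terms $P^T$ occurring in $P^R\cdot P^S$ you must examine $\Delta(M_T)$ for every $T$ and ask when $M_R\otimes M_S$ occurs there. Moreover the off-diagonal summands $\xi_{k-i}^{2^i}\otimes\xi_i$ with $0<i<k$ do not lower excess on the tensor side---they \emph{raise} the total excess of the two tensor factors (to $2^{i+1}+2$, versus $2$ for $\xi_k\otimes 1$ or $1\otimes\xi_k$). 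The correct deduction is the contrapositive: every off-diagonal choice strictly inflates the total excess of the tensor factors above $e(P^T)$, and the relation $\tau_i^2=\tau\xi_{i+1}$ preserves excess; hence if $M_R\otimes M_S$ appears non-diagonally in $\Delta(M_T)$ then $e(P^T)<e(P^R)+e(P^S)$, while equality forces a purely diagonal splitting and therefore $T=R+S$. This also disposes of your worry about $\tau$-multiplied corrections: the relation is only triggered by off-diagonal choices, so it never touches the leading coefficient.

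Second, and more seriously, the diagonal count does not ``simplify to the claimed product $\prod_i\binom{r_i}{s_i}$.'' Distributing the $\tau_{i-1}$- and $\xi_i$-factors of $M_{R+S}$ across the two tensor slots gives, at each index $i$ and after Lucas' theorem, the coefficient $\binom{r_i+s_i}{s_i}$ modulo $2$, which is not the same as $\binom{r_i}{s_i}$. The discrepancy is real: for $R=(4)$, $S=(1)$ one checks from $\Delta(\tau_0\xi_1^2)$ that $P^{(5)}$ occurs in $P^{(4)}\cdot P^{(1)}=\Sq^4\Sq^1$ with coefficient $\binom{5}{1}=1$, whereas $\binom{4}{1}\equiv 0$. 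The printed statement appears to be a typo for $\prod_i\binom{r_i+s_i}{s_i}$; indeed the applications in Propositions \ref{prop:AFreeA1} and \ref{prop:AtildeA0Free} require $\binom{4r_1+s_1}{s_1}=1$ for $0\le s_1\le 3$, which holds for the corrected binomial and fails for the printed one. So you should carry out the diagonal computation honestly and prove the corrected formula, rather than asserting that it lands on the coefficient as stated.
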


\begin{pf}
This follows from the description of the dual motivic Steenrod algebra,
similarly to the classical case \cite{Milnor}.
\end{pf}

We will use the Milnor basis to establish the following fact about
the right action of $A(1)$ on $A$.

\begin{prop}\label{prop:AFreeA1} 
The motivic Steenrod algebra $A$ is free as a right $A(1)$-module.
\end{prop}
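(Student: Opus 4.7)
The plan is to exhibit an explicit set of elements $B \subset A$ that freely generates $A$ as a right $A(1)$-module, using the motivic Milnor basis together with the leading-term formula of Lemma~\ref{lem:Milnor-product}. Take $B$ to be the set of Milnor basis elements $P^R$ with $R = (r_1, r_2, r_3, \ldots)$ satisfying $r_1 \equiv 0 \pmod 4$ and $r_2 \equiv 0 \pmod 2$. Given any Milnor basis element $P^T$ of $A$, uniquely decompose $t_1 = 4k_1 + s_1$ with $s_1 \in \{0,1,2,3\}$ and $t_2 = 2k_2 + s_2$ with $s_2 \in \{0,1\}$; then $R = (4k_1, 2k_2, t_3, t_4, \ldots)$ lies in $B$, and $S = (s_1, s_2, 0, 0, \ldots)$ is one of the eight Milnor basis elements of $A(1)$, with $R + S = T$. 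This realizes a bidegree-preserving bijection between the Milnor basis of $A$ and the product of $B$ with the Milnor basis of $A(1)$.

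I would then claim that the right-multiplication map
\[
\mu \colon \bigoplus_{P^R \in B} A(1) \longrightarrow A,
\]
sending $a \in A(1)$ in the summand indexed by $P^R$ to $P^R \cdot a$, is an isomorphism of right $A(1)$-modules. For surjectivity I would induct on excess: by Lemma~\ref{lem:Milnor-product}, applied to $(R, S)$ as above, $P^R \cdot P^S$ equals a unit multiple of $P^T$ modulo terms of strictly lower excess. Lucas' theorem confirms that the leading coefficient is odd, because the divisibility conditions defining $B$ force the base-$2$ digits of $s_i$ and of $r_i$ to lie in disjoint ranges (positions $\{0,1\}$ versus positions $\geq 2$ for $i=1$, and position $\{0\}$ versus positions $\geq 1$ for $i=2$). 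The lower-excess correction lies in the image by the induction hypothesis, so $P^T$ does too, and since the Milnor basis spans $A$ over $\bM_2$ the map $\mu$ is surjective.

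To upgrade surjectivity to an isomorphism, I would compare $\bM_2$-ranks. Both source and target of $\mu$ are $\bM_2$-free with bases in bidegree-preserving bijection via the decomposition above, and both are of finite type, so restricted to each bidegree $\mu$ is a surjection between finitely generated free $\bM_2$-modules of equal rank. Since $\bM_2 = \F_2[\tau]$ is a PID, such a surjection is automatically an isomorphism.

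The principal technical point is the binomial-coefficient verification powering the induction, but this reduces cleanly to Lucas' theorem once one tracks the binary-digit positions of $r_1, s_1$ and of $r_2, s_2$ separately. The only other subtlety is the ``equal rank'' argument, which relies on the finite-type hypothesis on $A$ so that the bidegree-wise comparison takes place between finitely generated torsion-free modules over the PID $\bM_2$.
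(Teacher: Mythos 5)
Your proof is correct and follows essentially the same route as the paper: the same basis of Milnor elements $P^{(4k_1,2k_2,r_3,\dots)}$, the same excess filtration, and the same leading-term computation, with your induction on excess plus bidegree-wise rank count being the explicit version of the paper's observation that the composite $M\otimes_{\bM_2}A(1)\to A\to\mathrm{gr}_*A$ is an isomorphism of $\bM_2$-modules. One small remark: your Lucas-theorem check is the right one for a leading coefficient of the form $\binom{r_i+s_i}{s_i}$ (the standard Milnor product formula), whereas Lemma~\ref{lem:Milnor-product} as printed reads $\binom{r_i}{s_i}$, which under your disjoint-digits observation would vanish mod $2$ whenever $s_i>0$; it is the lemma's statement, not your argument, that needs adjusting.
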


\begin{pf}
Define a filtration on $A$ by $F_s(A) = \{ \theta\in A \mid e(\theta) \leq s\}$. 
Lemma \ref{lem:Milnor-product}
implies that the associated graded object
\[ \mathrm{gr_*} A = \bigoplus_s F_s(A)/F_{s-1}(A)\]
inherits the structure of an $\bM_2$-algebra. 
Let $M$ be the $\bM_2$-submodule of $A$ generated by Milnor
basis elements of the form $P^{(4s_1,2s_2,s_3,s_4,\dots)}$.
Consider the composition
\[ \xymatrix{
M \otimes_{\bM_2} A(1) \ar[r] & A \ar[r] & \mathrm{gr_*}A,
}
\]
in which the first map is induced by the multiplication of $A$
and the second map is an isomorphism of $\bM_2$-modules.

Lemma \ref{lem:Milnor-product} implies that the composition is an isomorphism
of $\bM_2$-modules.  Therefore, the first map is also an isomorphism.
This shows that $A$ is free as a right $A(1)$-module with basis consisting
of elements of the form $P^{(4s_1,2s_2,s_3,s_4,\dots)}$.
\end{pf}

\begin{rmk}
The reader may wonder why Proposition \ref{prop:AFreeA1} is
not an immediate consequence of results in \cite{MM}.
The problem is that it is not obvious that the projection $A \rtarr A/\!/A(1)$ is
split as an $\bM_2$-map.  The proof of Proposition \ref{prop:AFreeA1}
is essentially the same as showing that the projection is in fact split.
\end{rmk}

\begin{eg}
Let $B$ be the Hopf subalgebra of $A$ that is generated over $\bM_2$ by
$\tau \Sq^1$.  The projection $A \rtarr A/\!/B$ is not split as an $\bM_2$-map,
since $\Sq^1$ projects to an element that is $\tau$ torsion.
Moreover, $A$ is not free as a right $B$-module.
\end{eg}

\section{The module $\widetilde{A}$.}

\begin{defn} 
Let $\widetilde{A}(1)$ be the left $A(1)$-module
on two generators $a$ and $b$ of degrees $(0,0)$ and $(2,0)$ respectively,
subject to the relations
\[ \Sq^2 a = \tau b,\quad  \Sq^1 \Sq^2 \Sq^1 a = \Sq^2 b. \]
\end{defn}

The relation $\Sq^2 a = \tau b$ implies that
$\Sq^2 \Sq^2 a = \tau \Sq^2 b$, so
$\tau \Sq^1 \Sq^2 \Sq^1 a = \tau \Sq^2 b$.
However, the first relation does not imply that
$\Sq^1 \Sq^2 \Sq^1 a = \Sq^2 b$.  This explains why we need 
a second relation in the definition of $\widetilde{A}(1)$.

Figure~\ref{fig:Atilde} represents $\widetilde{A}(1)$, according to the
following key:
\begin{enumerate}
\item
Each circle represents a copy of $\bM_2$.
\item  
Each straight line represents multiplication by $\Sq^1$.
\item
Each curved line represents multiplication by $\Sq^2$.
\item
Each dashed line indicates that the squaring operation hits 
$\tau$ times an $\bM_2$-generator, but not the generator itself.
\end{enumerate}

\begin{figure}[h]

\psset{unit=7mm}

\begin{pspicture}(-1,0)(1,6)

\psline(0,0)(0,1)
\psline(0,2)(1,3)
\psline(-1,3)(0,4)
\psline(0,5)(0,6)

\psbezier[linestyle=dashed](0,0)(0.7,0.7)(0.7,1.3)(0,2)
\psbezier(0,1)(-0.7,1.7)(-1,2.3)(-1,3)
\psbezier(0,2)(0.5,2.7)(0.5,3.3)(0,4)
\psbezier(1,3)(1,3.7)(0.7,4.3)(0,5)
\psbezier[linestyle=dashed](0,4)(-0.7,4.7)(-0.7,5.3)(0,6)

\pscircle[fillstyle=solid,fillcolor=white](0,0){0.2}
\pscircle[fillstyle=solid,fillcolor=white](0,1){0.2}
\pscircle[fillstyle=solid,fillcolor=white](0,2){0.2}
\pscircle[fillstyle=solid,fillcolor=white](-1,3){0.2}
\pscircle[fillstyle=solid,fillcolor=white](1,3){0.2}
\pscircle[fillstyle=solid,fillcolor=white](0,4){0.2}
\pscircle[fillstyle=solid,fillcolor=white](0,5){0.2}
\pscircle[fillstyle=solid,fillcolor=white](0,6){0.2}

\end{pspicture}

\caption{{$\widetilde{A}(1)$} }
\label{fig:Atilde}
\end{figure}

Analogous to the Milnor basis for $A(1)$,
we have a basis for $\widetilde{A}(1)$ 
consisting of the elements 
$\widetilde{P}^{(s_1, s_2)}$ such that 
$0 \leq s_1 \leq 3$ and $0 \leq s_2 \leq 1$.
The elements $\widetilde{P}^2$, $\widetilde{P}^3$, $\widetilde{P}^{2,1}$, and 
$\widetilde{P}^{3,1}$ have weight one less than the corresponding Milnor basis elements for $A(1)$. We define the excess in $\widetilde{A}(1)$ using this basis.

%{\color{blue} \bf
%Mention Hopkins' question about realizing $\widetilde{A}(1)$ as the cohomology of some spectrum?
%}

\begin{rmk}
%We write $A(1)_{\mathrm{cl}}\subseteq A_{\mathrm{cl}}$ for the classical pair of Hopf algebras. It is known that $A(1)_{\mathrm{cl}}\subseteq A_{\mathrm{cl}}$ extends to an $A_\mathrm{cl}$-module in four different ways and furthermore \cite{DM} that each of these extensions can be realized as the cohomology of a spectrum. The same is true of our motivic analogue. 
Just like in the classical case \cite{DM},
the $A(1)$-modules $A(1)$ and $\widetilde{A}(1)$ each extend to $A$-modules
in four different ways, 
determined by the action of $\Sq^4$.
Adams spectral sequence computations verify that 
all eight of these $A$-modules arise as the cohomology of a $2$-complete motivic spectrum. 
These constructions are the subject of work in progress on 
motivic $v_1$-self maps.
\end{rmk}

\begin{defn}
Let $\widetilde{A}$ be the left $A$-module
$A\otimes_{A(1)} \widetilde{A}(1)$.
\end{defn}

\begin{rmk}
Proposition~\ref{prop:AFreeA1} implies that 
$\widetilde{A}$ is the left $A$-module generated by two elements
$a$ and $b$ of degrees $(0,0)$ and $(2,0)$ subject to the relations
\[\Sq^2 a=\tau b, \quad \Sq^3\Sq^1 a = \Sq^2 b.\]
\end{rmk}

\begin{prop}\label{prop:AtildeA0Free}
The $A$-module $\widetilde{A}$ is free as a left $A(0)$-module.
\end{prop}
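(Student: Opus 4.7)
The plan is to verify the hypotheses of Proposition \ref{prop:FreeA0}: that $\widetilde{A}$ is $\bM_2$-free and that its $\Sq^1$-Margolis homology vanishes. For $\bM_2$-freeness, I would invoke Proposition \ref{prop:AFreeA1} to get an isomorphism $A \iso F \otimes_{\bM_2} A(1)$ of right $A(1)$-modules, where $F$ is the $\bM_2$-span of the basis $\{P^{(4s_1, 2s_2, s_3, \dots)}\}$; tensoring then gives $\widetilde{A} \iso F \otimes_{\bM_2} \widetilde{A}(1)$ as $\bM_2$-modules, and both $F$ and $\widetilde{A}(1)$ (the latter by inspection of the eight cells in Figure \ref{fig:Atilde}) are $\bM_2$-free.

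For the Margolis vanishing, I would first read off from Figure \ref{fig:Atilde} that $\widetilde{A}(1)$ is itself free over $A(0)$: the straight $\Sq^1$-lines pair its eight cells into four $A(0)$-summands generated by $a$, $b$, $\Sq^2\Sq^1 a$, and $\Sq^2\Sq^1 b$. The key step is then the short exact sequence of left $A(1)$-modules
\[ 0 \to \Sigma^{2,1}\widetilde{A}(1) \to A(1)\langle\alpha,\beta\rangle \to \widetilde{A}(1) \to 0, \]
where the right-hand map sends the free generators $\alpha, \beta$ (in bidegrees $(0,0)$ and $(2,0)$) to $a$ and $b$. Its kernel is generated by $k_1 = \Sq^2\alpha - \tau\beta$ and $k_2 = \Sq^3\Sq^1\alpha - \Sq^2\beta$, and the motivic Adem relations $\Sq^2\Sq^2 = \tau\Sq^3\Sq^1$ and $\Sq^1\Sq^2 = \Sq^3$ force $\Sq^2 k_1 = \tau k_2$ and $\Sq^1\Sq^2\Sq^1 k_1 = \Sq^2 k_2$ inside the kernel. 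Since these match the defining relations of $\widetilde{A}(1)$ shifted by $(2,1)$, one obtains a surjection $\Sigma^{2,1}\widetilde{A}(1) \twoheadrightarrow \ker$, and a bidegree-by-bidegree count of $\bM_2$-ranks (both sides contribute eight cells, matching in each bidegree) shows this is an isomorphism.

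Since $A$ is right $A(1)$-flat, applying $A \otimes_{A(1)} (-)$ to this sequence produces
\[ 0 \to \Sigma^{2,1}\widetilde{A} \to A \oplus \Sigma^{2,0}A \to \widetilde{A} \to 0. \]
The middle term is $A(0)$-free because $A$ itself is (a consequence of the left-module analogue of Proposition \ref{prop:AFreeA1} combined with the $A(0)$-freeness of $A(1)$), so the long exact sequence in $\Sq^1$-Margolis homology collapses to the isomorphism $H^{s,w}(\widetilde{A};\Sq^1) \iso H^{s-1,w-1}(\widetilde{A};\Sq^1)$. Iterating and using that $\widetilde{A}$ is bounded below forces $H^{*,*}(\widetilde{A};\Sq^1) = 0$, whereupon Proposition \ref{prop:FreeA0} finishes the proof. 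The main obstacle is the identification $\ker \iso \Sigma^{2,1}\widetilde{A}(1)$ in the auxiliary sequence, which rests on the motivic Adem computations together with the Poincaré series match.
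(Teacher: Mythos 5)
Your argument is correct, but it takes a genuinely different route from the paper. The paper's proof is a direct one-paragraph construction: using the Milnor-style basis $P^{(4r_1,2r_2,r_3,\dots)}\otimes\widetilde{P}^{s_1,s_2}$ and the excess filtration, it checks that left multiplication by $P^1$ sends a basis element with $s_1$ even to the one with $s_1+1$ modulo lower excess, and so exhibits an explicit $A(0)$-basis (the elements with $s_1$ even). You instead verify the hypotheses of the Margolis criterion (Proposition \ref{prop:FreeA0}), with the vanishing of $H^{*,*}(\widetilde{A};\Sq^1)$ coming from the short exact sequence $0 \to \Sigma^{2,1}\widetilde{A} \to A\oplus\Sigma^{2,0}A \to \widetilde{A}\to 0$ obtained by base-changing the kernel identification $\ker\bigl(A(1)\oplus\Sigma^{2,0}A(1)\to\widetilde{A}(1)\bigr)\iso\Sigma^{2,1}\widetilde{A}(1)$ along the flat extension $A(1)\subset A$. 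That identification is exactly the input the paper uses (without proof) for its periodic resolution in Proposition \ref{prop:Ceta-proj}, and your Adem computations and bidegree count for it check out, so your route has the virtue of reusing a fact the paper needs anyway and of avoiding Milnor-basis products in $\widetilde{A}$; the cost is that it is longer, non-constructive (no explicit basis), and needs the additional input that $A$ is free as a \emph{left} $A(0)$-module. On that last point your citation is slightly off: the paper only proves that $A$ is free as a \emph{right} $A(1)$-module, so rather than appealing to an unproved left-module analogue of Proposition \ref{prop:AFreeA1}, it is cleaner to get left $A(0)$-freeness of $A$ directly from the admissible-basis argument of Lemma \ref{lem:NFreeA(0)} (left multiplication by $\Sq^1$ on admissible monomials with $r_1$ even), or from Proposition \ref{prop:FreeA0} itself. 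Also, your observation that $\widetilde{A}(1)$ is $A(0)$-free is never actually used in your argument and can be dropped.
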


\begin{pf}
We have a Milnor-style basis for $\widetilde{A}$ consisting of elements of the form
$P^{(4r_1,2r_2,r_3,\dots)} \otimes \widetilde{P}^{s_1,s_2}$ 
such that $0 \leq s_1 \leq 3$ and $0 \leq s_2 \leq 1$.
We also have a filtration by excess.

Lemma \ref{lem:Milnor-product} implies that 
\[ P^1 P^{(4r_1,2r_2,r_3,\dots)} \equiv P^{(4r_1+1,2r_2,r_3,\dots)} \equiv P^{(4r_1,2r_2,r_3,\dots)}P^1\]
in $A$, modulo terms of lower excess.
Also,
$P^1\widetilde{P}^{s_1,s_2} = \widetilde{P}^{s_1+1,s_2}$ in $\widetilde{A}(1)$
if $s_1$ is even.
Therefore, if $s_1$ is even, then
\[ P^1 (P^{(4r_1,2r_2,r_3,\dots)}\otimes \widetilde{P}^{s_1,s_2}) \equiv P^{(4r_1,2r_2,r_3,\dots)}\otimes \widetilde{P}^{s_1+1,s_2} 
\]
in $\widetilde{A}$, modulo terms of lower excess.
It follows that 
an $A(0)$-basis for $\widetilde{A}$ consists of elements of the form
$P^{(4r_1,2r_2,r_3,\dots)} \otimes \widetilde{P}^{s_1,s_2}$ 
such that $s_1$ is even.
\end{pf}

\section{The cofiber of $\eta$} 
\label{sctn:cofiber-eta}

\begin{defn} Let $C(\eta)$ denote the cofiber of 
the first Hopf map $\eta:S^{1,1}\rtarr S^{0,0}$. We then write $\mathbf{C}_\eta$ for the $A$-module $\Sigma^{-2,-1} \mathrm{H}^{*,*}(C(\eta))$. Thus $\mathbf{C}_\eta$ has a bottom cell in bidegrees $(-2,-1)$ and a top cell in $(0,0)$, connected by a $\Sq^2$ (see figure).
\end{defn}

\psset{unit=7mm}
\begin{center}
\begin{pspicture}(-1,-1)(1,2.5)
%\psset{linewidth=0.3ex}
%\psarc(-0.6,1){1.2}{305}{55}
%\psset{linewidth=0.2ex}
\psbezier(0,0)(0.7,0.7)(0.7,1.3)(0,2)
\pscircle[fillstyle=solid](0,0){0.2}
\pscircle[fillstyle=solid](0,2){0.2}
\uput[30](0,2){$(0,0)$}
\uput[-15](0,0){$(-2,-1)$}
\uput[0](0.7,1.2){$\Sq^2$}
%\uput[180](-0.3,1.2){$\mathbf{C}_\eta$}
\end{pspicture}
\end{center}

The following result implies that, for any $A$-module $M$, the groups $\Ext^*(M,\mathbf{C}_\eta)$ may be computed using a resolution $F^*\to M$ whose terms are of the form \mbox{$F^n \iso A^r \oplus \widetilde{A}^s$.}

\begin{prop} 
\label{prop:Ceta-proj}
$\Ext^{s,f}_A(\widetilde{A},\mathbf{C}_\eta)=0$ for $f>0$.
\end{prop}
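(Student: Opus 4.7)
The plan is to reduce via change of rings to a finite-type problem over $A(1)$, and then use the cell structure of $\mathbf{C}_\eta$ to express the vanishing as an $h_1$-periodicity statement on a small $\Ext$ algebra that can be computed directly.

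Since $\widetilde{A} = A\otimes_{A(1)}\widetilde{A}(1)$ by definition and $A$ is free as a right $A(1)$-module by Proposition~\ref{prop:AFreeA1}, the change-of-rings isomorphism yields
\[ \Ext^{s,f}_A(\widetilde{A}, \mathbf{C}_\eta) \iso \Ext^{s,f}_{A(1)}(\widetilde{A}(1), \mathbf{C}_\eta). \]
The short exact sequence of $A(1)$-modules
\[ 0 \rtarr \bM_2 \rtarr \mathbf{C}_\eta \rtarr \Sigma^{-2,-1}\bM_2 \rtarr 0, \]
coming from the cell structure of $\mathbf{C}_\eta$, has extension class $h_1$. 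The induced long exact sequence in $\Ext^{*,*}_{A(1)}(\widetilde{A}(1), -)$ then reduces the vanishing claim for $f > 0$ to showing that multiplication by $h_1$ is an isomorphism on $\Ext^{*,*}_{A(1)}(\widetilde{A}(1), \bM_2)$ for $f \geq 1$ and a surjection for $f = 0$.

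To verify this $h_1$-periodicity I would construct an explicit $A(1)$-projective resolution of $\widetilde{A}(1)$, starting from the presentation $A(1)\{a_0, b_0\} \rtarr \widetilde{A}(1)$, whose kernel is generated by $r_1 = \Sq^2 a_0 - \tau b_0$ and $r_2 = \Sq^1\Sq^2\Sq^1 a_0 - \Sq^2 b_0$. The motivic Adem relation $\Sq^2\Sq^2 = \tau\Sq^3\Sq^1$ immediately produces the secondary syzygy $\Sq^2 r_1 = \tau r_2$, which has the same form as the defining relation of $\widetilde{A}(1)$ itself. Iterating this observation and using the Milnor-basis techniques of Section~\ref{sctn:Milnor} to control the excess of the higher syzygies, one finds that the resolution becomes periodic in sufficiently high filtration, with $h_1$ realizing the periodicity operator.

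The main obstacle will be the careful $\tau$-bookkeeping as the syzygies propagate, together with the low-filtration verification that no unexpected $\tau$-torsion spoils the $h_1$-periodicity at $f = 0, 1$. The finite type of both $A(1)$ and $\widetilde{A}(1)$ (each is free of rank $8$ as an $\bM_2$-module, cf.~Figure~\ref{fig:Atilde}) makes this check bounded, though delicate.
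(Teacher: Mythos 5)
Your proposal is correct and follows essentially the same route as the paper: reduce to $\Ext_{A(1)}(\widetilde{A}(1),\mathbf{C}_\eta)$ by change of rings using Proposition~\ref{prop:AFreeA1}, and exploit the fact that the first syzygy of $\widetilde{A}(1)$ is a shifted copy $\Sigma^{2,1}\widetilde{A}(1)$ of itself to build a periodic two-generator free $A(1)$-resolution. The only cosmetic difference is that the paper computes $\Ext_{A(1)}(\widetilde{A}(1),\mathbf{C}_\eta)$ directly from that resolution, rather than passing through the long exact sequence of the cell structure and the $h_1$-periodicity of $\Ext_{A(1)}(\widetilde{A}(1),\bM_2)$.
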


\begin{pf}
The kernel of the defining quotient $A(1) \oplus \Sigma^{2,0} A(1) \rtarr \widetilde{A}(1)$ is isomorphic to $\Sigma^{2,1}\widetilde{A}(1)$. It follows that we can define a periodic free $A(1)$-resolution
\[ \widetilde{A}(1) \leftarrow F_0 \leftarrow F_1\leftarrow \cdots,\]
where $F_n \iso A(1)\{x_n,y_n\}$, with $|x_n| = (2n,n)$,  $|y_n| = (2n+2,n)$, 
\[d(x_n) = \Sq^2(x_{n-1}) + \tau\cdot y_{n-1},\qquad  \text{and} \qquad d(y_n) = \Sq^3\Sq^1(x_{n-1}) + \Sq^2(y_{n-1}).\]
Using this resolution, it is simple to verify that 
\[ \Ext_{A(1)}^{s,f}(\widetilde{A}(1),\mathbf{C}_\eta) = 0, \qquad \text{for } f>0.\]
Since $A$ is free as a right $A(1)$-module by Proposition~\ref{prop:AFreeA1}, we get that
\[ \Ext_A^{s,f}(\widetilde{A},\mathbf{C}_\eta) \iso \Ext_{A(1)}^{s,f}(\widetilde{A}(1),\mathbf{C}_\eta) = 0 \qquad \text{for } f>0. \]
\end{pf}

The next lemma translates properties of $h_1$ multiplication into homological
properties of the $A$-module $\mathbf{C}_\eta$.

\begin{lemma}
\label{lem:RestateLoc} 
Let $n\geq 0$ and let $M$ be an $A$-module.
The following are equivalent:
\begin{enumerate}
\item
The map
\[ h_1:\Ext_A^{s,f,w}(M,\bM_2) \xrightarrow{} \Ext_A^{s+1,f+1,w+1}(M,\bM_2) \]
is injective when $s<2f-2$ and $f\leq n$, and 
it is surjective when $s\leq 2f$ and $f<n$.
\item
$\Ext_A^{s,f,w}(M,\mathbf{C}_\eta)=0$ when $s<2f$ and $f\leq n$.
\end{enumerate}
\end{lemma}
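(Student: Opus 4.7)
The plan is to apply the long exact sequence of $\Ext$-groups arising from a short exact sequence of $A$-modules built from $\mathbf{C}_\eta$. The top cell of $\mathbf{C}_\eta$, sitting in bidegree $(0,0)$, generates an $A$-submodule isomorphic to $\bM_2$, because its higher $\Sq^k$'s land in bidegrees that do not appear in $\mathbf{C}_\eta$; the corresponding quotient is $\Sigma^{-2,-1}\bM_2$. This gives the short exact sequence
\[ 0 \to \bM_2 \to \mathbf{C}_\eta \to \Sigma^{-2,-1}\bM_2 \to 0, \]
whose extension class, computed directly from a free resolution of $\Sigma^{-2,-1}\bM_2$ (the relevant generator being the one that kills $\Sq^2$ times the fundamental class), corresponds to $h_1 \in \Ext_A^{1,1,1}(\bM_2, \bM_2)$ under the canonical identification $\Ext_A^{s,f,w}(\Sigma^{-2,-1}\bM_2, \bM_2) \iso \Ext_A^{s+2, f, w+1}(\bM_2, \bM_2)$.

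Applying $\Ext_A^{*,*,*}(M, -)$ and using the companion identification $\Ext_A^{s,f,w}(M, \Sigma^{-2,-1}\bM_2) \iso \Ext_A^{s-2,f,w-1}(M, \bM_2)$, I will rewrite the long exact sequence so that each connecting homomorphism becomes multiplication by $h_1$ (which it is, since the connecting map is Yoneda composition with the extension class). Around the term $\Ext^{s,f,w}(M, \mathbf{C}_\eta)$, the rewritten fragment reads
\begin{multline*}
\Ext^{s-1, f-1, w-1}(M, \bM_2) \xrightarrow{h_1} \Ext^{s, f, w}(M, \bM_2) \to \Ext^{s, f, w}(M, \mathbf{C}_\eta) \\
\to \Ext^{s-2, f, w-1}(M, \bM_2) \xrightarrow{h_1} \Ext^{s-1, f+1, w}(M, \bM_2).
\end{multline*}

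By exactness, the middle term vanishes precisely when the incoming $h_1$-multiplication is surjective and the outgoing one is injective. Reindexing each $h_1$-map by the tridegree $(s', f', w')$ of its source converts the hypothesis ``$\Ext^{s,f,w}(M, \mathbf{C}_\eta) = 0$ whenever $s < 2f$ and $f \leq n$'' into the combined conditions ``$h_1$ is surjective when $s' \leq 2f'$ and $f' < n$'' and ``$h_1$ is injective when $s' < 2f' - 2$ and $f' \leq n$'', which is exactly condition (1). The hard part is carefully setting up the suspension identifications to confirm that the connecting map really is $h_1$-multiplication with the stated degree shift; once that is nailed down, the equivalence reduces to exactness plus the arithmetic reindexing of the range conditions.
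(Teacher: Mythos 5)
Your proof is correct and takes essentially the same approach as the paper: the short exact sequence $0 \to \bM_2 \to \mathbf{C}_\eta \to \Sigma^{-2,-1}\bM_2 \to 0$, the identification of the connecting homomorphism with multiplication by $h_1$ (the extension class in $\Ext_A^{1,1,1}$), and the exactness-plus-reindexing argument. The paper simply writes down the resulting long exact sequence and says the result ``follows easily,'' so your version supplies the degree bookkeeping the paper omits, and your index shifts and range conversions check out.
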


\begin{pf}
The $A$-module $\mathbf{C}_\eta$ sits in a short exact sequence
\[ 0 \rtarr \bM_2 \rtarr \mathbf{C}_\eta \rtarr \Sigma^{-2,-1}\bM_2 \rtarr 0.
\]
This gives a long exact sequence 
\[
\xrightarrow{h_1}
\Ext_A^{s+1,f-1,w}(M,\bM_2) \to 
\Ext_A^{s+1,f-1,w}(M,\mathbf{C}_\eta) \to
\Ext_A^{s-1,f-1,w-1}(M,\bM_2) \xrightarrow{h_1} 
\]
%in $\Ext(M,-)$, 
in which
the connecting homomorphism is multiplication by $h_1$. 
The result then follows easily.
\end{pf}

Our next goal is to carry out an explicit low-dimensional $\Ext$ computation.
This result will be critical for a later argument.
We will consider $A(0)$ to be an $A$-module with the obvious action by $A$,
i.e., $\Sq^1$ acts by multiplication in the subalgebra $A(0)$, and
$\Sq^n$ acts trivially for $n \geq 2$.

\begin{prop}
\label{prop:low-Ext}
$\Ext^{s,f,w}(A(0),\mathbf{C}_\eta)=0$ when $s<2f$ and $f\leq 4$.
\end{prop}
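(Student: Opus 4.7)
We compute $\Ext_A^{s,f,w}(A(0), \mathbf{C}_\eta)$ in the stated range by a direct low-dimensional calculation. The plan is to build a partial $A$-resolution $F^\bullet \rtarr A(0)$ through filtration $4$, then apply $\Hom_A(-, \mathbf{C}_\eta)$ and read off cohomology. By Proposition~\ref{prop:Ceta-proj}, both $A$ and $\widetilde{A}$ are acyclic for this functor in positive filtrations, so each $F^n$ may be taken to be a direct sum of shifted copies of $A$ and $\widetilde{A}$. Since $A(0) = \bM_2\{1, \Sq^1\}$ is a small module and $\mathbf{C}_\eta$ has only two cells, the required calculation is finite.

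To set this up, take $F^0 = A\{x\}$ with $x \mapsto 1 \in A(0)$; the kernel is then the left ideal generated by $\{\Sq^{2^i}\}_{i \geq 1}$. In the range $f \leq 4$ and $s < 8$, only internal degrees at most about $11$ matter, so only $\Sq^2$ and $\Sq^4$ contribute at this stage. I would take $F^1 = A\{y_2\} \oplus A\{y_4\}$ with $y_{2^i}$ in bidegree $(2^i, 2^{i-1})$ mapping to $\Sq^{2^i}x$, then identify the relations in the kernel of $F^1 \rtarr F^0$ using Lemma~\ref{lem:Milnor-product} and the structure of the motivic Milnor basis, and continue in the same manner to produce $F^2, F^3, F^4$ in the relevant range. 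Where convenient one can economize by using $\widetilde{A}$ summands (which package two generators related by a $\Sq^2$ and a $\tau$ into a single term), though this is not essential.

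Once the resolution is in hand, applying $\Hom_A(-, \mathbf{C}_\eta)$ yields a small cochain complex of $\bM_2$-modules, since a map $A \rtarr \mathbf{C}_\eta$ is simply an element of $\mathbf{C}_\eta$ at a specified bidegree and $\mathbf{C}_\eta$ is concentrated at $(-2, -1)$ and $(0, 0)$. Analogous remarks apply to $\Hom_A(\widetilde{A}, \mathbf{C}_\eta)$ via the explicit presentation of $\widetilde{A}$. The cohomology of this complex can then be computed by inspection, yielding the claimed vanishing for $s < 2f$ and $f \leq 4$. The principal obstacle is the bookkeeping: one must enumerate all generators and relations through filtration $4$ within the relevant $(s,w)$-range without omission, and track the motivic weight carefully, since the presence or absence of a factor of $\tau$ can be the difference between a would-be class on the boundary $s = 2f - 1$ surviving or being killed in the cohomology.
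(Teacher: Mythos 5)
Your approach is sound but genuinely different from the paper's. The paper does not build an $A$-free resolution of $A(0)$ at all for this step: it exploits the two-cell structure of $A(0)$ via the short exact sequence $0 \to \Sigma^{1,0}\bM_2 \to A(0) \to \bM_2 \to 0$ to read off $\Ext_A(A(0),\bM_2)$ in low degrees from the already-known $\Ext_A(\bM_2,\bM_2)$, and then uses the cell structure $0 \to \bM_2 \to \mathbf{C}_\eta \to \Sigma^{-2,-1}\bM_2 \to 0$ (whose connecting map is $h_1$) to pass to $\Ext_A(A(0),\mathbf{C}_\eta)$; the answer is recorded in two charts. That route buys brevity, since the low-dimensional motivic $\Ext_A(\bM_2,\bM_2)$ over $\C$ is standard, and it makes the extension/connecting-map data visible as $h_0$- and $h_1$-multiplications. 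Your route---a resolution of $A(0)$ by sums of $A$ and $\widetilde{A}$, justified by Proposition~\ref{prop:Ceta-proj}---is exactly the machinery the paper deploys \emph{later}, in the proof of Proposition~\ref{MainProp}, to build $R_0 \leftarrow \cdots \leftarrow R_3$ and run the periodicity argument; using it for the base case as well is perfectly legitimate and arguably more self-contained, at the cost of more bookkeeping.

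Two cautions on the execution. First, your remark that ``only $\Sq^2$ and $\Sq^4$ contribute at this stage'' is too quick: a generator $y_8$ in bidegree $(8,4)$ at filtration $1$ lies outside the vanishing range at $f=1$, but its descendants in a minimal resolution can in principle reach internal degree $11$ at filtration $4$, i.e.\ the spot $(s,f)=(7,4)$ with $s<2f$, so $\Sq^8$ cannot simply be discarded; it must be carried along (or argued away) through filtration $4$. Second, to prove vanishing at $f=4$ without constructing $F^5$, you should phrase the final step as showing that $d\colon \Hom_A(F^3,\mathbf{C}_\eta) \to \Hom_A(F^4,\mathbf{C}_\eta)$ is surjective in the relevant tridegrees, rather than computing $\ker/\mathrm{im}$ at $f=4$. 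Neither point invalidates the plan, but both belong to the ``without omission'' bookkeeping you correctly identify as the main burden.
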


\begin{pf}
%First calculate $\Ext(A(0),\bM_2)$ in low degrees, using t
The short exact sequence
\[ 0 \rtarr \Sigma^{1,0}\bM_2 \rtarr A(0) \rtarr \bM_2 \rtarr 0 \]
gives a calculation of  $\Ext(A(0),\bM_2)$ in low degrees, starting from
knowledge of $\Ext(\bM_2,\bM_2)$ in a similar range. % in low degrees.
%Then calculate $\Ext(A(0),\mathbf{C}_\eta)$ using t
The short exact sequence 
\[ 0 \rtarr \bM_2 \rtarr \mathbf{C}_\eta \rtarr \Sigma^{-2,-1}\bM_2
\rtarr 0
\]
then yields a calculation of $\Ext(A(0),\mathbf{C}_\eta)$.
The results of these calculations are shown in 
Figures~\ref{fig:A(0)M} and \ref{fig:A(0)C}.
\end{pf}

Figures \ref{fig:A(0)M} and \ref{fig:A(0)C}
represent low-dimensional $\Ext$ calculations necessary
for the proof of Proposition \ref{prop:low-Ext}.
Here is a key for reading the charts.
\begin{enumerate}
\item
Black dots indicate copies of $\bM_2$.
\item
Vertical lines indicate multiplications by $h_0$.
\item
Lines of slope $1$ indicate multiplications by $h_1$.
\item
Lines of slope $1/3$ indicate multiplications by $h_2$.
\item
Blue lines indicate that the multiplication hits $\tau$ times a generator.
\item
Red arrows indicate infinitely many copies of $\bM_2/\tau$ connected by
$h_1$ multiplications.
\end{enumerate}

\psset{linewidth=0.3mm}    
\psset{unit=5mm}
\begin{figure}[h]
\begin{pspicture}(-1,-1)(13,8)

\psgrid[unit=1,gridcolor=gridline,subgriddiv=0,gridlabelcolor=white](-0.5,-0.5)(-0.5,-0.5)(13,8)

\scriptsize 
\psline(0.5,0.5)(2.5,2.5)
\psline[linecolor=red]{->}(2.5,2.5)(3.2,3.2)
\psline(2.5,1.5)(4.5,3.5)
\psline[linecolor=red]{->}(4.5,3.5)(5.2,4.2)
\psline[linecolor=blue](2.5,1.5)(2.5,2.5)
\psline(0.5,0.5)(9.3,3.5)
\psline(7.5,1.5)(9.3,3.5)
\psline(7.5,2.5)(9.5,4.5)
\psline[linecolor=red]{->}(9.5,4.5)(10.2,5.2)
\psline(7.5,2.5)(10.5,3.5)
\psline(9.5,2.5)(10.5,3.5)
\psline(8.5,4.5)(10.5,6.5)
\psline(8.5,4.5)(11.5,5.5)
\psline[linecolor=red]{->}(10.5,6.5)(11.2,7.2)
\psline(9.7,3.5)(10.6,4.5)
\psline[linecolor=red]{->}(10.6,4.5)(11.2,5.2)
\psline[linecolor=blue](9.7,3.5)(9.5,4.5)
\psline(10.5,5.5)(12.5,7.5)
\psline[linecolor=red]{->}(12.5,7.5)(13.2,8.2)
\psline[linecolor=blue](10.5,5.5)(10.5,6.5)
\psline[linecolor=blue](9.5,2.5)(9.3,3.5)

\pscircle*(0.5,0.5){\cirrad}
\pscircle*(1.5,1.5){\cirrad}
\pscircle*(2.5,2.5){\cirrad}
\pscircle*(2.5,1.5){\cirrad}
\pscircle*(3.5,2.5){\cirrad}
\pscircle*(4.5,3.5){\cirrad}
\pscircle*(3.5,1.5){\cirrad}
\pscircle*(6.5,2.5){\cirrad}
\pscircle*(9.3,3.5){\cirrad}
\pscircle*(7.5,1.5){\cirrad}
\pscircle*(8.4,2.5){\cirrad}
\pscircle*(8.5,3.5){\cirrad}
\pscircle*(9.5,4.5){\cirrad}
\pscircle*(7.5,2.5){\cirrad}
\pscircle*(10.5,3.5){\cirrad}
\pscircle*(9.5,2.5){\cirrad}
\pscircle*(8.5,4.5){\cirrad}
\pscircle*(9.5,5.5){\cirrad}
\pscircle*(10.5,6.5){\cirrad}
\pscircle*(9.7,3.5){\cirrad}
\pscircle*(10.6,4.5){\cirrad}
\pscircle*(11.5,5.5){\cirrad}
\pscircle*(10.5,5.5){\cirrad}
\pscircle*(11.5,6.5){\cirrad}
\pscircle*(12.5,7.5){\cirrad}

\rput(0.5,-0.5){0}
\rput(2.5,-0.5){2}
\rput(4.5,-0.5){4}
\rput(6.5,-0.5){6}
\rput(8.5,-0.5){8}
\rput(10.5,-0.5){10}
\rput(12.5,-0.5){12}
\psline{->}(11,-1.2)(12.5,-1.2)
\rput(11.5,-1.6){$s$}
\rput(-0.5,0.5){0}
\rput(-0.5,2.5){2}
\rput(-0.5,4.5){4}
\rput(-0.5,6.5){6}
\psline{->}(-1.0,6)(-1.0,7.5)
\rput(-1.5,6.7){$f$}

\end{pspicture}

\caption{
{$\Ext_A(A(0),\bM_2)$} }
\label{fig:A(0)M}
\end{figure}

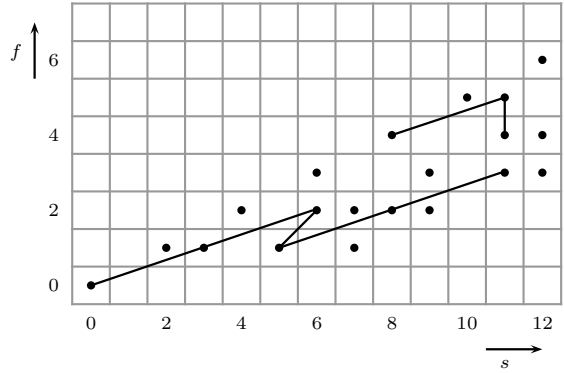
\begin{figure}[h]
\begin{pspicture}(-1,-1)(13,8)

\psgrid[unit=1,gridcolor=gridline,subgriddiv=0,gridlabelcolor=white](-0.5,-0.5)(-0.5,-0.5)(13,8)

\scriptsize 
\pscircle*(0.5,0.5){\cirrad}
\pscircle*(2.5,1.5){\cirrad}
\pscircle*(4.5,2.5){\cirrad}
\pscircle*(6.5,3.5){\cirrad}
\pscircle*(3.5,1.5){\cirrad}
\pscircle*(6.5,2.5){\cirrad}
\psline(0.5,0.5)(6.4,2.5)
\pscircle*(7.5,1.5){\cirrad}
%\pscircle*(8.5,3.5){\cirrad}
\pscircle*(7.5,2.5){\cirrad}
\pscircle*(9.5,2.5){\cirrad}
\pscircle*(8.5,4.5){\cirrad}
\pscircle*(9.5,3.5){\cirrad}
\pscircle*(11.5,5.5){\cirrad}
\pscircle*(10.5,5.5){\cirrad}
\pscircle*(5.5,1.5){\cirrad}
\pscircle*(8.5,2.5){\cirrad}
\psline(8.5,4.5)(11.5,5.5)
\pscircle*(11.5,3.5){\cirrad}
\psline(5.5,1.5)(11.4,3.5)
\psline(5.5,1.5)(6.5,2.5)
\pscircle*(11.5,4.5){\cirrad}
\psline(11.5,4.5)(11.5,5.5)
\pscircle*(12.5,4.5){\cirrad}
\pscircle*(12.5,3.5){\cirrad}
\pscircle*(12.5,6.5){\cirrad}

\rput(0.5,-0.5){0}
\rput(2.5,-0.5){2}
\rput(4.5,-0.5){4}
\rput(6.5,-0.5){6}
\rput(8.5,-0.5){8}
\rput(10.5,-0.5){10}
\rput(12.5,-0.5){12}
\psline{->}(11,-1.2)(12.5,-1.2)
\rput(11.5,-1.6){$s$}
\rput(-0.5,0.5){0}
\rput(-0.5,2.5){2}
\rput(-0.5,4.5){4}
\rput(-0.5,6.5){6}
\psline{->}(-1.0,6)(-1.0,7.5)
\rput(-1.5,6.7){$f$}

\end{pspicture}

\caption{
{$\Ext_A(A(0),\mathbf{C}_\eta)$} }
\label{fig:A(0)C}
\end{figure}

\section{Multiplication by $h_1$ for $A(0)$-free $A$-modules}
\label{sec:CohomA2}

Recall that $A/\!/A(0)$ denotes the Hopf algebra quotient $A \otimes_{A(0)} \bM_2$.
There is a short exact sequence
\[0 \ltarr \bM_2 \xltarr{\varepsilon} A/\!/A(0) \ltarr I \ltarr 0\]
of $A$-modules,
where $\varepsilon$ is the augmentation and $I$
is the augmentation ideal.
This short exact sequence 
gives rise to a long exact sequence 
\[ \Ext_A^{s,f,w} \xrtarr{\varepsilon^*} \Ext_A^{s,f,w}(A/\!/A(0),\bM_2) \rtarr \Ext_A^{s,f,w}(I,\bM_2) \xrtarr{\partial} \Ext_A^{s-1,f+1,w}  \]
of modules over $\Ext_A$.
We have a change-of-rings isomorphism 
\[\Ext_A(A/\!/A(0),\bM_2) \iso \Ext_{A(0)}(\bM_2,\bM_2) \iso \bM_2[h_0],\]
so $\varepsilon^*$ 
is an isomorphism when $s=0$ and $\partial$ an isomorphism when $s>1$.
 
The bottom class of $I$ is $\Sq^2$, which occurs in bidegree $(2,1)$, so that we may write $I \iso \Sigma^{2,1}N$ for a connective $A$-module $N$. 
%\begin{equation} \Ext_A^{s,f,w} \iso \Ext_A^{s+1,f-1,w}(\Sigma^{2,1}N,\bM_2) \iso \Ext_A^{s-1,f-1,w-1}(N,\bM_2).\label{eq:shift}\end{equation}
 It follows that for $s>0$, we have a commutative square 
%\begin{equation} 
\[\xymatrix{
 \Ext_A^{s,f,w} \ar[r]^{h_1} & \Ext_A^{s+1,f+1,w+1} \\
 \Ext_A^{s+1,f-1,w}(I,\bM_2) \ar[u]^\iso_\partial \ar[r]_{h_1} & \Ext_A^{s+2,f,w+1}(I,\bM_2) \ar[u]_\iso^\partial \\
 \Ext_A^{s-1,f-1,w-1}(N,\bM_2) \ar[u]^\iso \ar[r]_{h_1} & \Ext_A^{s,f,w}(N,\bM_2) \ar[u]_\iso \\
  } \]
of $\Ext_A$-module maps.
%  \label{eq:shift}\end{equation}

\begin{lemma}\label{lem:NFreeA(0)} 
The $A$-module $N$ is free as a left $A(0)$-module. 
\end{lemma}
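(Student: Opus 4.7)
The plan is to apply the criterion of Proposition~\ref{prop:FreeA0}: I will show that $N$ is bounded below, of finite type, free as an $\bM_2$-module, and has vanishing $\Sq^1$-Margolis homology. The first two properties are immediate, so the work is in the last two.

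For $\bM_2$-freeness, I would first observe that $A$ is free as a \emph{right} $A(0)$-module: the same Milnor-basis excess-filtration argument used to prove Proposition~\ref{prop:AFreeA1} works equally well with $A(0) \subset A$ in place of $A(1) \subset A$, yielding the right $A(0)$-basis $\{P^R : s_1 \text{ even}\}$. Consequently $A/\!/A(0) = A \otimes_{A(0)} \bM_2$ is $\bM_2$-free. Since the augmentation $\varepsilon : A/\!/A(0) \to \bM_2$ admits the obvious $\bM_2$-linear section $1 \mapsto [1]$, the kernel $I$ is a direct $\bM_2$-summand of $A/\!/A(0)$, and hence so is $N = \Sigma^{-2,-1} I$.

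For the Margolis-homology vanishing, I would use the long exact sequence in $\Sq^1$-Margolis homology associated to $0 \to I \to A/\!/A(0) \to \bM_2 \to 0$. Since $\Sq^1$ acts trivially on $\bM_2$, one has $H^{*,*}(\bM_2;\Sq^1) = \bM_2$ concentrated in bidegree $(0,0)$. It therefore suffices to show that $H^{*,*}(A/\!/A(0);\Sq^1)$ is also $\bM_2$ concentrated in bidegree $(0,0)$, generated by the class of $1$: then the map induced by $\varepsilon$ on Margolis homology is an isomorphism, and the long exact sequence forces $H^{*,*}(I;\Sq^1) = 0$, equivalently $H^{*,*}(N;\Sq^1) = 0$.

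The main obstacle is the computation $H^{*,*}(A/\!/A(0); \Sq^1) = \bM_2$. My approach would be to exploit the Hopf algebra structure: because $\Sq^1$ is primitive in $A$, it acts as a derivation on the quotient Hopf algebra $A/\!/A(0)$. Dually, $(A/\!/A(0))^*$ sits inside $A^*$ as the subalgebra of elements coprimitive under the $A(0)^*$-coaction; one checks that it is generated by $\xi_i$ ($i \geq 1$) together with the modified classes $\zeta_i := \tau_i - \xi_i \tau_0$ ($i \geq 1$), subject to the relations $\zeta_i^2 = \tau(\xi_{i+1} + \xi_1 \xi_i^2)$ inherited from $\tau_i^2 = \tau\xi_{i+1}$. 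From this explicit presentation the $\Sq^1$-Margolis homology can be computed by a Koszul-style calculation. An alternative would be to use Proposition~\ref{prop:AFreeA1} to identify $A/\!/A(0)$ with the induced module $A \otimes_{A(1)} A(1)/\!/A(0)$, perform the finite Margolis-homology computation on $A(1)/\!/A(0)$, and then promote the result via an excess-filtration spectral sequence. In either strategy the motivic subtlety flagged in Remark~\ref{rmk:M2-module} and Proposition~\ref{prop:FreeA0}—namely, the appearance of $\tau$-torsion in quotients—demands careful bookkeeping, but the expected outcome is that only the unit class $[1]$ survives.
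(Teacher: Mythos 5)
Your reduction is sound as far as it goes: Proposition~\ref{prop:FreeA0} does apply to $N$ (bounded below, finite type), your argument that $A$ is free as a right $A(0)$-module by the excess-filtration method is correct and gives $\bM_2$-freeness of $A/\!/A(0)$ and hence of $I$, and the long exact sequence correctly reduces the problem to showing $H^{*,*}(A/\!/A(0);\Sq^1)=\bM_2\{[1]\}$. But that last computation is the entire content of the lemma, and you never carry it out: you offer two candidate strategies (a Koszul-style calculation on a dual sub-Hopf-algebra whose presentation you have not verified, or an excess-filtration spectral sequence) and conclude only with ``the expected outcome is that only the unit class survives.'' That is a genuine gap, and it sits exactly where the motivic subtleties you flag (the relation $\tau_i^2=\tau\xi_{i+1}$ and possible $\tau$-torsion in subquotients) could in principle cause trouble, so it cannot be waved through.

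The paper's proof closes this gap with a one-line combinatorial observation that makes the whole detour through Proposition~\ref{prop:FreeA0} unnecessary. Take the admissible basis of $A/\!/A(0)$, consisting of monomials $\Sq^{r_1}\Sq^{r_2}\cdots\Sq^{r_n}$ with $r_i\geq 2r_{i+1}$ and $r_n>1$. When $r_1$ is even, the Adem relation gives \emph{exactly}
\[
\Sq^1\cdot\Sq^{r_1}\Sq^{r_2}\cdots\Sq^{r_n}=\Sq^{r_1+1}\Sq^{r_2}\cdots\Sq^{r_n},
\]
and the right-hand side is again admissible. So the admissible monomials in $I$ pair off perfectly under left multiplication by $\Sq^1$, exhibiting $I$ as a free left $A(0)$-module on the admissibles with $r_1$ even and $r_n>1$; this is the argument you should use. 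Note that this explicit pairing is precisely the input you would need anyway to compute the Margolis homology in your approach, so your route does not avoid the combinatorics --- it only adds the hypotheses of Proposition~\ref{prop:FreeA0} on top of them.
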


\begin{pf}
This argument is identical to the classical case.

We use the admissible $\bM_2$-basis for $A$, which consists of monomials of the form
$\Sq^{r_1} \Sq^{r_2} \cdots \Sq^{r_n}$ such that $r_i \geq 2 r_{i+1}$.
Then $A/\!/A(0)$ has an $\bM_2$-basis consisting of the admissible monomials
$\Sq^{r_1} \Sq^{r_2} \cdots \Sq^{r_n}$ such that $r_n > 1$.

When $r_1$ is even, $\Sq^1 \cdot \Sq^{r_1} \Sq^{r_2} \cdots \Sq^{r_n}$ 
equals $\Sq^{r_1+1} \Sq^{r_2} \cdots \Sq^{r_n}$.  Therefore,
the augmentation ideal $I$ of $A/\!/A(0)$ is a free left $A(0)$-module with basis
consisting of admissible monomials of the form
$\Sq^{r_1} \Sq^{r_2} \cdots \Sq^{r_n}$ such that $r_1$ is even and $r_n > 1$.
%The argument is the same as the classical case.
%Using the Milnor basis for $A$ and Lemma \ref{lem:Milnor-product}, we have that, if $s_1$ is even, then
%\[ P^1 P^{(s_1,s_2,s_3,\dots)} = P^{(s_1+1,s_2,s_3,\dots)}.\]
%The resulting basis for $I$ is given by the elements in which either $s_1\geq 2$ or some other $s_n$ is at least $1$. It follows that $I$, and therefore also $N$, is free over $A(0)$.
\end{pf}

\begin{prop}\label{MainProp}
Let $M$ be an $A$-module that is free as an $A(0)$-module and concentrated in nonnegative degrees. 
Then the map
\[ h_1:\Ext_A^{s,f,w}(M,\bM_2) \xrightarrow{} \Ext_A^{s+1,f+1,w+1}(M,\bM_2)\]
is an isomorphism if $ s < 2f-2$, and it is a surjection if $s\leq 2f$.
\end{prop}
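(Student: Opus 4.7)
First, I would apply Lemma~\ref{lem:RestateLoc}: since there is no upper bound on $f$ in the statement of the proposition, letting $n \to \infty$ reduces us to showing
\[
\Ext_A^{s,f,w}(M, \mathbf{C}_\eta) = 0 \quad \text{for all } s < 2f.
\]
My plan is to construct an $A$-resolution $F_\bullet \rtarr M$ whose terms are direct sums of shifts of $A$ and $\widetilde{A}$, arranged so that at homological degree $n$ every generator lies in internal degree at least $3n + 2$.

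Such a resolution has two features that together yield the vanishing. First, since free $A$-modules are trivially $\Ext_A(-,\mathbf{C}_\eta)$-acyclic in positive filtration, and the same holds for $\widetilde{A}$ by Proposition~\ref{prop:Ceta-proj}, the complex $\Hom_A(F_\bullet, \mathbf{C}_\eta)$ computes $\Ext_A(M, \mathbf{C}_\eta)$. Second, a generator of $F_n$ at internal degree $p \geq 3n+2$ contributes to $\Hom_A^{s,n,w}(F_n, \mathbf{C}_\eta)$ only at $s \in \{p-n-2,\, p-n\}$, both of which are $\geq 2n$; hence the $\Hom$-complex vanishes in the range $s < 2n$, and so does its cohomology.

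The resolution will be built inductively. At each stage, I would cover the current kernel $K_n = \ker(F_n \rtarr F_{n-1})$ by shifted copies of $A$ and $\widetilde{A}$. Each $F_n$ is $A(0)$-free (for $A$-summands this follows from Proposition~\ref{prop:AFreeA1} together with the $A(0)$-freeness of $A(1)$, and for $\widetilde{A}$-summands from Proposition~\ref{prop:AtildeA0Free}); combining this with the hypothesis that $M$ is $A(0)$-free, Corollary~\ref{cor:A0FreeSOS} ensures that every kernel $K_n$ remains $A(0)$-free, so the construction can be continued indefinitely. The key decision is the deliberate use of $\widetilde{A}$-summands to absorb those classes in $K_n$ arising from motivic Adem relations of the shape $\Sq^2 x = \tau y$; without this, a purely $A$-free resolution would produce new $A$-generators on or below the line $s = 2f$ (as already seen for $M = A(0)$, whose naive $A$-resolution has a generator at $(s,f) = (1,1)$ coming from $\Sq^2$). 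The defining relation $\Sq^2 a = \tau b$ of $\widetilde{A}$ is precisely the tool that lets these low-degree classes be absorbed without introducing a forbidden generator.

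The main obstacle will be to verify rigorously that this $\widetilde{A}$-absorption can be carried out at every stage, preserving the degree bound $p \geq 3n + 2$ throughout the induction. The base case is provided by the low-dimensional computation of Proposition~\ref{prop:low-Ext}, and the structural analysis of Section~\ref{sctn:A0-mod}, particularly the interaction between Margolis homology and $\bM_2$-freeness, is the key tool for controlling the placement of $A$-generators in each successive kernel.
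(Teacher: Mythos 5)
Your reduction via Lemma~\ref{lem:RestateLoc} and your identification of resolutions built from $A$ and $\widetilde{A}$ (with $\widetilde{A}$ absorbing the $\Sq^2 x = \tau y$ patterns) are exactly the right ingredients, and they match the paper's strategy. But the statement you defer as ``the main obstacle'' --- that the resolution can be arranged so that every generator of $F_n$ lies in internal degree at least $3n+2$ --- \emph{is} the theorem, and your proposal contains no mechanism for proving it. A direct induction on $n$ does not close: knowing the degrees of the generators of $F_n$ does not by itself bound the connectivity of $K_n = \ker(F_n \to F_{n-1})$, because the minimal generators of a kernel of a map of (sums of) $A$'s and $\widetilde{A}$'s can a priori sit in low degree; controlling where they sit is equivalent to the $\Ext$ vanishing you are trying to establish. (Also, as literally stated your bound fails at $n=0$, e.g.\ for $M=A(0)$, which needs a generator in degree $0$; the intercept has to be handled separately.)

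The paper closes this gap with two moves you do not make. First, it reduces the general case to the single module $A(0)$: an $A(0)$-free $M$ concentrated in nonnegative degrees is built iteratively from extensions $0 \to \bigoplus \Sigma^{t_i,w_i}A(0) \to M \to M' \to 0$, so the vanishing of $\Ext_A^{s,f,w}(-,\mathbf{C}_\eta)$ for $s<2f$ in a given range of $f$ propagates from $A(0)$ to all such $M$ by the long exact sequence. Second, for $M = A(0)$ it replaces the all-$n$ connectivity claim by a finite explicit computation plus periodicity: Proposition~\ref{prop:low-Ext} gives the vanishing for $f\le 4$; the explicit resolution $R_0 \leftarrow \cdots \leftarrow R_3$ of $A(0)$ by copies of $A$ and $\widetilde{A}$ has third kernel $K_3 = \Sigma^{12,0}D$ with $D$ nonnegatively graded and (by Proposition~\ref{prop:AtildeA0Free} and Corollary~\ref{cor:A0FreeSOS}) again $A(0)$-free, so the dimension-shift isomorphism
\[
\Ext_A^{s,f,w}(D,\mathbf{C}_\eta) \iso \Ext_A^{s+8,f+4,w}(A(0),\mathbf{C}_\eta) \qquad (f>0)
\]
bootstraps the vanishing from $f\le 4$ to $f\le 8$, and then to all $f$ by iteration. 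You would need to supply either this periodicity argument or some genuine substitute for it; as written, your proof is a correct framing of the problem rather than a proof.
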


We will mimic the classical argument of Adams \cite{A2}, with some variations to account
for motivic phenomena.

\begin{proof}
By Lemma \ref{lem:RestateLoc}, it suffices to show that
$\Ext_A^{s,f,w}(M, \mathbf{C}_\eta)$ vanishes when $s < 2f$.

We begin by recalling from Proposition \ref{prop:low-Ext} that
$\Ext_A^{s,f,w}(A(0), \mathbf{C}_\eta)$ vanishes when
$s < 2f$ and $f \leq 4$.

An arbitrary module $M$ can be built up iteratively as an extension
\[ 0 \rtarr \bigoplus\Sigma^{t_i,w_i} A(0) \rtarr M \rtarr M' \rtarr 0\]
%\textbf{[How exactly does this work?  Bigrading on the suspension?]}
of $A$-modules,
where 
$\Ext_A^{s,f,w}(M', \mathbf{C}_\eta)$ vanishes for $s < 2f$ and $f \leq 4$
by induction.
The long exact sequence in 
$\Ext$ then shows that 
$\Ext_A^{s,f,w}(M, \mathbf{C}_\eta)$ also vanishes for $s < 2f$ and $f \leq 4$.

We have now established the proposition for $f \leq 4$.  The next step is to extend
the result to larger values of $f$.  As in the previous step, 
we start with the special case $M = A(0)$.

In order to compute 
$\Ext_A^{s,f,w}(A(0), \mathbf{C}_\eta)$, we must construct a resolution 
for $A(0)$.
Proposition \ref{prop:Ceta-proj} says that this resolution can be built from
copies of $A$ or from copies of $\widetilde{A}$.

We construct a resolution 
\[ A(0) \leftarrow R_0 \leftarrow R_1\leftarrow R_2\leftarrow R_3\leftarrow \cdots\]
of $A(0)$ in the usual way by adding a copy of $A$ to $R_{n+1}$ for each indecomposable
in the kernel $K_n$ of the boundary map $R_n \rtarr R_{n-1}$.
However, when we find two indecomposable elements $x$ and $y$ of $K_n$ such that
$\Sq^2 x = \tau y$, we add one copy of $\widetilde{A}$ to $R_{n+1}$ rather than
two copies of $A$.

%\textbf{[The weight in $\Sigma^{12,5} D$ doesn't seem to be relevant.]}
As a result of this process, one can verify that the kernel 
$K_3$ of the boundary map $R_3 \rtarr R_2$ vanishes in degrees less than $12$.
So we can write $K_3 = \Sigma^{12,0}D$, where $D$ is concentrated in non-negative 
degrees. 
Note that $D$ is $\bM_2$-free since it is a submodule of 
the $\bM_2$-free module $\Sigma^{-12,0} R_3$. 

Proposition~\ref{prop:AtildeA0Free} implies that each $R_n$ is $A(0)$-free.
Then Corollary~\ref{cor:A0FreeSOS} implies that $D$ is $A(0)$-free as well.

We already know from an earlier step that
$\Ext_A^{s,f,w}(D,\mathbf{C}_\eta)$ vanishes for $s < 2f$ and $f \leq 4$.
We have isomorphisms
\[ \Ext_A^{s,f,w}(D,\mathbf{C}_\eta) \iso \Ext_A^{s+12,f,w}(K_3,\mathbf{C}_\eta) \iso \Ext_A^{s+8,f+4,w}(A(0),\mathbf{C}_\eta)\]
for $f>0$, so 
it follows that 
$\Ext_A^{s,f,w}(A(0),\mathbf{C}_\eta)$ vanishes for $s < 2f$ and $f \leq 8$.

As before, we can then show that 
when $M$ is an arbitrary module,
$\Ext_A^{s,f,w}(M,\mathbf{C}_\eta)$ vanishes for $s < 2f$ and $f \leq 8$.
This establishes the proposition for $f \leq 8$.

We can repeat this process to establish the proposition for all $f$.
\end{proof}

\end{document}